\xdef\csname bf\x \endcsname{\noexpand\ensuremath{\noexpand\mathbf{\x}}}
\xdef\csname bm\x \endcsname{\noexpand\ensuremath{\noexpand\boldsymbol{\x}}}
\xdef\csname bs\x \endcsname{\noexpand\ensuremath{\noexpand\boldsymbol{\x}}}
\xdef\csname bf\x \endcsname{\noexpand\ensuremath{\noexpand\mathbf{\x}}}
\xdef\csname bb\x \endcsname{\noexpand\ensuremath{\noexpand\mathbb{\x}}}
\xdef\csname ds\x \endcsname{\noexpand\ensuremath{\noexpand\mathds{\x}}}
\xdef\csname cal\x \endcsname{\noexpand\ensuremath{\noexpand\mathcal{\x}}}
\def\transp{\top}
\def\herm{\mathsf{H}}
\def\bmmu{\boldsymbol{\mu}}
\renewcommand{\phi}{\varphi}
\def\real{\mathrm{Re}\,}
\def\imag{\mathrm{Im}\,}
\newcommand{\invol}[2]{{#1}^{#2}}
\newcommand{\conj}[1]{#1^{\ast}}
\newcommand{\conjinvol}[2]{{#1}^{#2\ast}}
\newcommand{\ip}[1]{\left\langle #1 \right\rangle}
\renewcommand{\epsilon}{\varepsilon}
\newcommand{\pdiff}[2]{\frac{\partial #1}{\partial #2}}
\newcommand{\aR}[1]{#1_{\calR}}
\newcommand{\aH}[1]{#1_{\calH}}
\newcommand{\gradR}{\nabla_{\calR}}
\newcommand{\gradH}{\nabla_{\calH}}
\newcommand{\gradQ}{\nabla_{\bfq}}
\newcommand{\conjgradQ}{\nabla_{\conj{\bfq}}}
\newcommand{\conjgradH}{\nabla_{\conj{\calH}}}
\DeclareMathOperator*{\argmin}{arg\,min}
\DeclareMathOperator*{\dom}{dom}
\DeclareMathOperator*{\prox}{\textbf{prox}}
\DeclareMathOperator*{\diag}{diag}
\newtheorem{proposition}{Proposition}
\newtheorem{definition}{Definition}
\newtheorem{remark}{Remark}
\def\ie{\emph{i.e.}~}
\begin{document}
\urlstyle{tt}

\title{A general framework for\\ constrained convex quaternion optimization}
%
%
%

\author{Julien~Flamant,
  Sebastian~Miron,
  and~David~Brie
  \thanks{J. Flamant, S. Miron and D. Brie are with the Université de Lorraine, CNRS, CRAN, F-54000 Nancy, France. Corresponding author \url{julien.flamant@cnrs.fr}. Authors acknowledge funding support of CNRS and GDR ISIS through project OPENING.}}

\maketitle

\begin{abstract}
  This paper introduces a general framework for solving constrained convex quaternion optimization problems in the quaternion domain.
  To soundly derive these new results, the proposed approach leverages the recently developed generalized $\bbH\bbR$-calculus together with the equivalence between the original quaternion optimization problem and its augmented real-domain counterpart.
  This new framework simultaneously provides rigorous theoretical foundations as well as elegant, compact quaternion-domain formulations for optimization problems in quaternion variables.
  Our contributions are threefold: \emph{(i)} we introduce the general form for convex constrained optimization problems in quaternion variables, \emph{(ii)} we extend fundamental notions of convex optimization to the quaternion case, namely Lagrangian duality and optimality conditions, \emph{(iii)} we develop the quaternion alternating direction method of multipliers (Q-ADMM) as a general purpose quaternion optimization algorithm.
  The relevance of the proposed methodology is demonstrated by solving two typical examples of constrained convex quaternion optimization problems arising in signal processing.
  Our results open new avenues in the design, analysis and efficient implementation of quaternion-domain optimization procedures.
\end{abstract}

\begin{IEEEkeywords}
  quaternion convex optimization; widely affine constraints; optimality conditions; alternating direction method of multipliers;
\end{IEEEkeywords}
\IEEEpeerreviewmaketitle

\section{Introduction}
\IEEEPARstart{T}{he use} of quaternion representations is becoming prevalent in many fields, including robotics \cite{127239},
attitude control and estimation \cite{markley2000quaternion,kristiansen2005satellite}
polarized signal processing \cite{miron2006quaternion,flamant_timefrequency_2019,flamant_complete_2018}, rolling bearing fault diagnosis \cite{yi_quaternion_2017}, computer graphics \cite{shoemake1985animating}, among others.
Compared to conventional real and complex models, quaternion algebra permits unique insights into the physics and the geometry of the problem at hand, while preserving a mathematically sound framework.
This is particularly true in color imaging \cite{chen2015color,subakan2011quaternion,xu_vector_2015,chen_low-rank_2020,miao_low-rank_2020,miao_quaternion-based_2020,liu_constrained_2020,xia_penalty_nodate,zou_quaternion_2021,zou_quaternion_2016,chan_complex_2016,flamant_quaternion_2020,mizoguchi_hypercomplex_2018,wang2021robust,mizoguchi_hypercomplex_2019,barthelemy_color_2015}, where quaternion models allow to efficiently and naturally handle cross correlations between color channels, unlike standard real-valued image representations -- based either on processing each color channel independently or by stacking them in a single long vector.
Quaternion modeling enables improved performances in color imaging tasks, such as sparse representations, low-rank approximations or missing data completion, to mention only a few.
Moreover, quaternions provide elegant and compact algebraic representations which may result in a reduction of the number of model parameters.
For instance, it has been recently shown \cite{zhu2018quaternion,parcollet2019quaternion} that quaternion convolutional neural networks (QCNNs) achieve better performance than conventional real CNNs while using fewer parameters.

Most problems involving quaternion models can be cast as the minimization of a real-valued function of quaternion variables.
Unfortunately, quaternion-domain optimization faces immediately a major obstacle: quaternion cost functions being real-valued, they are not differentiable according to quaternion analysis \cite{sudbery1979quaternionic} -- just like real-valued functions of complex variables are not differentiable according to complex analysis \cite{lang2013complex}.
Thus, for a long time, the intrinsic quaternion nature of quaternion optimization problems has been disregarded by reformulating them as optimization problems over the real field.
This procedure, however, is not completely satisfying.
Indeed, one can legitimately wonder whether such a real-domain equivalent model could have been directly designed without the algebraic / geometric insights offered by quaternion algebra.
Thus, it appears  more natural and intellectually fulfilling to solve the optimization problem directly in its originally quaternion form, rather than resorting to an equivalent \emph{ad hoc} real reformulation specifically designed for the problem at hand.
In this sense, developing a general quaternion optimization framework would enable end-to-end quaternion procedures, going from relevant algebraic / geometric models to practical algorithms.
Such a framework would boost the emergence of full quaternionic methodologies for efficiently solving practical problems, especially when dealing with 3D or 4D data.


%
Recently, a crucial step towards quaternion-domain optimization has been made with the development of the theory of $\bbH\bbR$-calculus \cite{mandic2011quaternion,xu2015theory,xu_enabling_2015,xu2016optimization}.
This new framework establishes a complete set of differentiation rules, encouraging the systematic development of quaternion-domain algorithms.
The $\bbH\bbR$-calculus can be seen as the generalization to quaternions of the $\bbC\bbR$-calculus \cite{kreutz-delgado_complex_2009}, which has enabled the formulation of several important complex-domain algorithms \cite{sorber_unconstrained_2012,li_alternating_2015}.
The theory of $\bbH\bbR$-calculus has led to the development of multiple quaternion-domain algorithms, notably in adaptive filtering \cite{jahanchahi_class_2013,che_ujang_quaternion-valued_2011,jahanchahi_class_2014}, low-rank quaternion matrix and tensor completion \cite{chen_low-rank_2020,miao_quaternion-based_2020,miao_low-rank_2020} and quaternion neural networks \cite{liu_constrained_2020,xia_penalty_nodate}.

Motivated by the increasing use of quaternion models in various applications, this paper provides a general methodology dedicated to quaternion constrained convex optimization problems.
We restrict ourselves to the convex case as it allows  to establish strong optimality guarantees.

The proposed framework builds upon two key ingredients: the recent theory of generalized \bbH\bbR-derivatives, to compute derivatives of cost functions defined in terms of quaternion variables, and the explicit correspondence between the original quaternion optimization problem and its real augmented counterpart, to rigorously establish general mathematical properties.
This work is part of the current effort in signal and image processing toward the development of general tools for solving optimization problems formulated in quaternion variables \cite{qi_quaternion_2021,he2021quaternion}.
%

%
More precisely, the use of systematic and explicit equivalencies between the original optimization problem in quaternion variables and its real-augmented domain counterpart allows us to derive once and for all fundamental results for  the general form of a convex constrained optimization problem in quaternion variables.
It permits to naturally extend elementary notions of convex optimization to the quaternion case, namely Lagrangian duality and optimality conditions.
Moreover, we develop the quaternion alternating direction method of multipliers (Q-ADMM) as a versatile quaternion optimization algorithm.
In essence, the real-augmented equivalent problem is used as a bridge to establish general full quaternion-domain optimization tools.
This methodology further reveals subtle properties of quaternion-domain optimization that would have been hindered otherwise.
This includes the \emph{widely affine} equality constraints that naturally arise in constrained quaternion problems, together with the quaternion nature of Lagrange dual variables associated with equality constraints.

The paper is organized as follows.
In Section \ref{sec:Preliminaries} we review the necessary material regarding quaternion variables, their different representations and discuss the general affine constraint in the quaternion domain.
Section \ref{sec:generalizedHR} describes generalized $\bbH\bbR$-calculus and its particular properties in the case of quaternion cost functions.
Section \ref{sec:convexoptimH} introduces the main theoretical tools for quaternion convex optimization problems, including definitions and optimality conditions.
Section \ref{sec:quaternionADMM} develops Q-ADMM in its general form to solve convex quaternion optimization problems.
Finally, we illustrate in Section \ref{sec:examples} the relevance of the proposed methodology by a detailed study of two examples of constrained quaternion optimization problems inspired by the existing signal processing literature. Section \ref{sec:conclusion} presents concluding remarks and perspectives.

\section{Preliminaries}
\label{sec:Preliminaries}

\subsection{Quaternion algebra}
The set of quaternions $\bbH$ defines a 4-dimensional normed division algebra over the real numbers $\bbR$.
It has canonical basis $\lbrace 1, \bmi, \bmj, \bmk\rbrace$, where $\bmi, \bmj, \bmk$ are imaginary units such that
\begin{equation}
  \bmi^2 = \bmj^2 = \bmk^2 = \bmi\bmj\bmk = -1, \quad \bmi\bmj = - \bmj \bmi, \bmi\bmj = \bmk\:.
\end{equation}
These relations imply in particular that quaternion multiplication is noncommutative, meaning that for $q, p \in \bbH$, one has $qp \neq pq$ in general.
Any quaternion $q \in \bbH$ can be written as
\begin{equation}
  q = q_a + \bmi q_b + \bmj q_c + \bmk q_d\:,
\end{equation}
where  $q_a, q_b, q_c, q_d \in \bbR$ are the components of $q$.
The real part of $q$ is $\real q = q_a$ whereas its imaginary part is $\imag q = \bmi q_b + \bmj q_c + \bmk q_d$.
A quaternion $q$ is said to be purely imaginary (or simply, pure) if $\real q = 0$.
The quaternion conjugate of $q$ is denoted by $\conj{q} = \real q - \imag q$ and acts on the product of two quaternions as $\conj{(pq)} = \conj{q}\conj{p}$.
The modulus of $q$ is $\vert q \vert = \sqrt{q\conj{q}} = \sqrt{\conj{q}q} = \sqrt{q_a^2 + q_b^2 + q_c^2 + q_d^2}$.
Any non-zero quaternion $q$ has an inverse $q^{-1} = \conj{q}/ \vert q \vert^2$.
The inverse of the product of two quaternions is $(pq)^{-1} = q^{-1}p^{-1}$.
Given a nonzero quaternion $\mu \in \bbH$, the transformation
\begin{equation}\label{eq:transformationRotation}
  \invol{q}{\mu} \triangleq \mu q \mu^{-1} = \frac{1}{\vert \mu \vert^2}\mu q \mu^*
\end{equation}
describes a three-dimensional rotation of the quaternion $q$.
In particular it satisfies the following properties
\begin{equation}
  \conjinvol{q}{\mu} \triangleq  \invol{(\conj{q})}{\mu}= \conj{(\invol{q}{\mu})}, \quad \invol{(pq)}{\mu} =  \invol{p}{\mu} \invol{q}{\mu}\:.
\end{equation}
Pure unit quaternions such as $\bmi, \bmj, \bmk$ (and more generally, any $\bmmu$ such that $\bmmu^2=-1$) play a special role and will be denoted in bold italic letters.
In this case the transformation \eqref{eq:transformationRotation} becomes an involution $\invol{q}{\bmmu} = -\bmmu q \bmmu$.
In particular
\begin{align}
  \invol{q}{\bmi} & = - \bmi q \bmi = q_a + \bmi q_b - \bmj q_c - \bmk q_d\:, \\
  \invol{q}{\bmj} & = - \bmj q \bmj = q_a - \bmi q_b + \bmj q_c - \bmk q_d\:, \\
  \invol{q}{\bmk} & = - \bmk q \bmk = q_a - \bmi q_b - \bmj q_c + \bmk q_d\:.
\end{align}
It follows that the components of $q$ can be directly expressed as a function of $q$ and its canonical involutions $\invol{q}{\bmi},\invol{q}{\bmj}, \invol{q}{\bmk}$ as
\begin{equation}
  \begin{split}
    q_a &= \frac{1}{4}\left(q + \invol{q}{\bmi} + \invol{q}{\bmj} + \invol{q}{\bmk}\right) \,,\\ q_b &= -\frac{\bmi}{4}\left(q + \invol{q}{\bmi} -\invol{q}{\bmj} - \invol{q}{\bmk}\right)\,,\\
    q_c &= -\frac{\bmj}{4}\left(q - \invol{q}{\bmi} + \invol{q}{\bmj} - \invol{q}{\bmk}\right) \,,\\
    q_d &= -\frac{\bmk}{4}\left(q - \invol{q}{\bmi} -\invol{q}{\bmj} + \invol{q}{\bmk}\right)\,.
  \end{split}
  \label{eq:involution2real}
\end{equation}

Any quaternion vector $\bfq \in \bbH^n$ can be written as $\bfq = \bfq_a + \bmi \bfq_b + \bmj \bfq_c + \bmk \bfq_d$, where $\bfq_a, \bfq_b, \bfq_c, \bfq_d \in \bbR^n$ are its components.
Similarly, any quaternion matrix $\bfA \in \bbH^{m \times n}$ can be expressed as $\bfA = \bfA_a + \bmi\bfA_b + \bmj\bfA_c + \bmk\bfA_d$ with $\bfA_a, \bfA_b, \bfA_c, \bfA_d \in \bbR^{m\times n}$.
The transpose of quaternion matrix $\bfA$ is denoted by $\bfA^\transp$.
Its conjugate transpose (or Hermitian) is $\bfA^\herm \triangleq (\conj{\bfA})^\transp = \conj{(\bfA^\transp)}$.
Unless otherwise stated, quaternion rotations or involutions are always applied entry-wise.
Note that quaternion matrix product requires special attention due to quaternion noncommutativity: that is for $\bfA \in \bbH^{m\times n}, \bfB \in \bbH^{n\times p}$, the $(i,j)$-th entry of $\bfA\bfB$ reads
\begin{equation}
  (\bfA\bfB)_{ij} = \sum_{k=1}^n A_{ik}B_{kj} \neq \sum_{k=1}^n B_{kj}A_{ik}\:.
\end{equation}
This implies notably that $(\bfA\bfB)^\transp \neq \bfB^\transp\bfA^\transp$ and $\conj{(\bfA\bfB)} \neq \conj{\bfA}\conj{\bfB}$ in general whereas $(\bfA\bfB)^\herm = \bfB^\herm\bfA^\herm$ always holds.
For more details on quaternions and quaternion linear algebra we refer the reader to \cite{ward2012quaternions, zhang_quaternions_1997} and references therein.

\subsection{Representation of quaternion vectors}
\label{sub:representationQuaternionVectors}
Quaternion vectors can be represented in three equivalent ways.
Let $\bfq \in \bbH^n$ and let us introduce
\begin{align}
  \calR & = \left\lbrace (\bfq_a^\transp, \bfq_b^\transp, \bfq_c^\transp, \bfq_d^\transp)^\transp \in \bbR^{4n} \mid \bfq \in \bbH^n  \right\rbrace\,,                                                    \\
  \calH & = \left\lbrace \left(\bfq^\transp, {\invol{\bfq}{\bmi}}^\transp, {\invol{\bfq}{\bmj}}^\transp, {\invol{\bfq}{\bmk}}^\transp\right)^\transp \in \bbH^{4n} \mid \bfq \in \bbH^n  \right\rbrace\:.
\end{align}
By definition, there exists a one-to-one mapping between each set $\bbH^n, \calR$ and $\calH$.
The set $\calR$ defines the \emph{augmented real representation} of $\bfq \in \bbH^n$ and can be identified with $\bbR^{4n}$.
We denote the augmented real vector by $\aR{\bfq}$.
The set $\calH$ defines the \emph{augmented quaternion representation} of $\bfq \in \bbH^n$ by making use of the three canonical involutions.
Importantly, $\calH$ is only a subset of $\bbH^{4n}$, \ie $\calH \subset \bbH^{4n}$.
We denote the augmented quaternion vector by $\aH{\bfq}$.

Expressions \eqref{eq:involution2real} show that $\aR{\bfq}$ and  $\aH{\bfq}$ are linked by the linear relationship
\begin{equation}
  \aH{\bfq} = \bfJ_n \aR{\bfq},\:
  \bfJ_n \triangleq \begin{bmatrix}
    \bfI_n & \bmi\bfI_n  & \bmj\bfI_n  & \bmk \bfI_n  \\
    \bfI_n & \bmi\bfI_n  & -\bmj\bfI_n & - \bmk\bfI_n \\
    \bfI_n & -\bmi\bfI_n & \bmj\bfI_n  & -\bmk \bfI_n \\
    \bfI_n & -\bmi\bfI_n & -\bmj\bfI_n & \bmk \bfI_n
  \end{bmatrix}.
\end{equation}
It can be shown that $\bfJ_n \in \bbH^{4n \times 4n}$ is invertible, with inverse $\bfJ^{-1}_n = \frac{1}{4}\bfJ^\herm_n$ so that conversely
\begin{equation}
  \aR{\bfq} = \frac{1}{4}\bfJ^\herm_n\aH{\bfq}\:.
\end{equation}{}
Now, equip each representation $\bbH^n$, $\calR$ and $\calH$ with the following real-valued inner products:
\begin{align}
  \ip{\bfq, \bfp}_{\bbH^n}          & \triangleq \real\left(\bfq^\herm\bfp \right)\,, \label{eq:innerproductHn} \\
  \ip{\aR{\bfq}, \aR{\bfp}}_{\calR} & \triangleq \aR{\bfq}^\transp \aR{\bfp}\,,                                 \\
  \ip{\aH{\bfq}, \aH{\bfp}}_{\calH} & \triangleq \frac{1}{4}\aH{\bfq}^\herm \aH{\bfp}\:.
\end{align}
Proposition \ref{prop:conservationInnerProduct} below shows that inner products are preserved from one representation to another.
\begin{proposition}\label{prop:conservationInnerProduct}
  Given $\bfq, \bfp \in \bbH^n$, the following equalities hold:
  \begin{equation}
    \ip{\bfq, \bfp}_{\bbH^n}  = \ip{\aR{\bfq}, \aR{\bfp}}_{\calR} = \ip{\aH{\bfq}, \aH{\bfp}}_{\calH}\:.
  \end{equation}
\end{proposition}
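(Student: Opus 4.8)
The plan is to establish the two equalities separately, each directly from the definitions of the corresponding inner products.

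For the first equality $\ip{\bfq, \bfp}_{\bbH^n} = \ip{\aR{\bfq}, \aR{\bfp}}_{\calR}$, I would argue componentwise. The key elementary fact is that for any two quaternions $q = q_a + \bmi q_b + \bmj q_c + \bmk q_d$ and $p = p_a + \bmi p_b + \bmj p_c + \bmk p_d$, the real part of $\conj{q}p$ reduces to the Euclidean dot product of their four real components, $\real(\conj{q}p) = q_a p_a + q_b p_b + q_c p_c + q_d p_d$, because every cross term carries one of the imaginary units $\bmi, \bmj, \bmk$ and hence contributes nothing to the real part. Applying this entrywise to $\bfq^\herm\bfp = \sum_{i} \conj{q_i} p_i$ and summing gives $\real(\bfq^\herm\bfp) = \bfq_a^\transp\bfp_a + \bfq_b^\transp\bfp_b + \bfq_c^\transp\bfp_c + \bfq_d^\transp\bfp_d$, which is precisely $\aR{\bfq}^\transp\aR{\bfp}$.

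For the second equality $\ip{\aR{\bfq}, \aR{\bfp}}_{\calR} = \ip{\aH{\bfq}, \aH{\bfp}}_{\calH}$, I would exploit the linear change of representation $\aH{\bfq} = \bfJ_n\aR{\bfq}$ established above. Substituting into the definition and using $(\bfA\bfB)^\herm = \bfB^\herm\bfA^\herm$ (which, unlike transposition and conjugation, always holds for quaternion matrices) together with the fact that $\aR{\bfq}$ is real, so $\aR{\bfq}^\herm = \aR{\bfq}^\transp$, I obtain $\ip{\aH{\bfq}, \aH{\bfp}}_{\calH} = \tfrac{1}{4}\aH{\bfq}^\herm\aH{\bfp} = \tfrac{1}{4}\aR{\bfq}^\transp\bfJ_n^\herm\bfJ_n\aR{\bfp}$. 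The stated identity $\bfJ_n^{-1} = \tfrac{1}{4}\bfJ_n^\herm$ is equivalent to $\bfJ_n^\herm\bfJ_n = 4\bfI_{4n}$, and substituting this collapses the expression to $\aR{\bfq}^\transp\aR{\bfp} = \ip{\aR{\bfq}, \aR{\bfp}}_{\calR}$, completing the chain.

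The computation is largely routine, and the only delicate point is bookkeeping under quaternion noncommutativity: I must apply the Hermitian-transpose rule $(\bfA\bfB)^\herm = \bfB^\herm\bfA^\herm$ rather than the generally false $(\bfA\bfB)^\transp = \bfB^\transp\bfA^\transp$. The genuine content hidden behind the algebra is that although $\aH{\bfq}^\herm\aH{\bfp}$ is a priori a sum of products of noncommuting quaternions and could be quaternion-valued, its factorization through $\bfJ_n^\herm\bfJ_n = 4\bfI_{4n}$ forces it to be real; this is what makes the $\calH$ inner product well-defined as a real quantity and is the step I would treat most carefully.
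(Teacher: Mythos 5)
Your proof is correct, and it splits into two halves that relate differently to the paper's argument. The first equality is handled exactly as in the paper: a direct entrywise computation showing $\real(\conj{q}p) = q_ap_a + q_bp_b + q_cp_c + q_dp_d$, summed over components. For the second equality, however, the paper's primary argument never touches the matrix $\bfJ_n$: it works intrinsically with involutions, using the homomorphism and conjugation properties $\invol{(pq)}{\bmmu} = \invol{p}{\bmmu}\invol{q}{\bmmu}$ and $\conjinvol{q}{\bmmu} = \conj{(\invol{q}{\bmmu})}$ to write ${\invol{\bfq}{\bmi}}^\herm\invol{\bfp}{\bmi} = \invol{(\bfq^\herm\bfp)}{\bmi}$, and then the reconstruction identity $q + \invol{q}{\bmi} + \invol{q}{\bmj} + \invol{q}{\bmk} = 4\real q$ from \eqref{eq:involution2real} to conclude $\aH{\bfq}^\herm\aH{\bfp} = 4\real(\bfq^\herm\bfp)$; this directly links the $\calH$ inner product to the $\bbH^n$ one. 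Your route instead goes through the change-of-basis identity $\bfJ_n^\herm\bfJ_n = 4\bfI_{4n}$ (equivalently, $\tfrac{1}{2}\bfJ_n$ is unitary), linking $\calH$ to $\calR$ — which is precisely the one-line alternative the paper appends at the end of its proof (``This result can also be obtained using $\aH{\bfq}^\herm\aH{\bfp} = \aR{\bfq}^\herm\bfJ_n^\herm\bfJ_n\aR{\bfp} = 4\aR{\bfq}^\transp\aR{\bfp}$''). The trade-off: the paper's involution computation stays inside quaternion algebra and exposes why the factor $4$ is the same one appearing in \eqref{eq:involution2real}, while your matrix route is more mechanical and leans on the (stated but unproven in the paper) invertibility fact $\bfJ_n^{-1} = \tfrac{1}{4}\bfJ_n^\herm$; since that fact is established before the proposition, this is a legitimate dependency, and your care with $(\bfA\bfB)^\herm = \bfB^\herm\bfA^\herm$ versus the false transpose rule, as well as your observation that the factorization forces $\aH{\bfq}^\herm\aH{\bfp}$ to be real, are exactly the right points to flag.
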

\begin{proof}
  Let $\bfq = \bfq_a + \bmi\bfq_b + \bmj\bfq_c + \bmk\bfq_d$ and $\bfp = \bfp_a + \bmi\bfp_b + \bmj\bfp_c + \bmk\bfp_d$ be vectors of $\bbH^n$.
  Using rules of quaternion calculus, a direct calculation shows that
  \begin{equation*}
    \real\left(\bfq^\herm \bfp\right) = \bfq_a^\transp\bfp_a + \bfq_b^\transp\bfp_b + \bfq_c^\transp\bfp_c + \bfq_d^\transp\bfp_d =  \aR{\bfq}^\transp \aR{\bfp}\:.
  \end{equation*}
  Then, by developing  $\aH{\bfq}^\herm \aH{\bfp}$ and using \eqref{eq:involution2real}, one gets
  \begin{align*}
    \aH{\bfq}^\herm \aH{\bfp} & = \bfq^\herm\bfp + {\invol{\bfq}{\bmi}}^\herm\invol{\bfp}{\bmi} + {\invol{\bfq}{\bmj}}^\herm\invol{\bfp}{\bmj} + {\invol{\bfq}{\bmk}}^\herm\invol{\bfp}{\bmk} \\
                              & = \bfq^\herm\bfp + \invol{\left(\bfq^\herm \bfp\right)}{\bmi} + \invol{\left(\bfq^\herm \bfp\right)}{\bmj} + \invol{\left(\bfq^\herm \bfp\right)}{\bmk}       \\
                              & = 4 \real\left(\bfq^\herm \bfp\right)
  \end{align*}
  which concludes the proof.
  This result can also be obtained using $\aH{\bfq}^\herm \aH{\bfp} = \aR{\bfq}^\herm\bfJ^\herm_n\bfJ_n \aR{\bfp} = 4\aR{\bfq}^\transp \aR{\bfp}$.
\end{proof}
\noindent It directly follows that induced norms are equal, such that $\Vert \bfq \Vert_{\bbH^n}  = \Vert \aR{\bfq} \Vert_{\calR} = \Vert \aH{\bfq} \Vert_{\calH}$.
Moreover, the norm induced by the real-valued inner product \eqref{eq:innerproductHn} is identical to the standard quaternion $2$-norm, since
\begin{equation}
  \Vert \bfq \Vert_{\bbH^n}^2 = \real\left(\bfq^\herm \bfq\right) = \Vert \bfq \Vert_2^2 \triangleq \sum_{i=1}^n\vert q_i\vert^2\:.
\end{equation}
For brevity, the $2$-norm notation will thus be used hereafter independently from the representation $\bbH^n$, $\calR$ or $\calH$.
In addition, whenever there is a risk of ambiguity, we add a subscript to quantities $\calR$ and $\calH$ in order to indicate the underlying dimension.

\subsection{Affine and linear constraints}
\label{sub:affine}
Affine equality constraints are ubiquitous in standard, real-domain constrained convex optimization.
To perform optimization directly in the quaternion domain, a key step is to investigate how such constraints translate to quaternions.
Let $\bfq \in \bbH^n$ be the variable of interest.
Using its augmented real representation, the most general affine constraint reads
\begin{equation}\label{eq:affineConstraintRealAugmented}
  \aR{\bfA} \bfq_{\calR} = \bfb_{\calR}, \quad \aR{\bfA} \in \bbR^{4p\times 4n}\:, \bfb_{\calR} \in \bbR^{4p} \tag{$\calA_{\calR}$}\:.
\end{equation}
The constraint is said to be linear if $\bfb_{\calR_p} =0$.
Our goal is to find equivalent formulations of \eqref{eq:affineConstraintRealAugmented} in the augmented quaternion domain $\calH$ and in $\bbH^n$, respectively.
Using the linear mapping between $\calH$ and $\calR$, one gets
\begin{align}
  \protect\eqref{eq:affineConstraintRealAugmented} & \Longleftrightarrow  \frac{1}{4}\bfJ_p\aR{\bfA} \bfJ^\herm_n \bfq_{\calH} = \bfJ_p\bfb_{\calR} \label{eq:affineEquationUnsimplified}\:, \\
                                                   & \Longleftrightarrow \aH{\bfA} \bfq_{\calH} = \bfb_{\calH} \label{eq:affineEquationHnHp}\tag{$\calA_{\calH}$}\:.
\end{align}
The constraint remains affine in the augmented quaternion domain, yet the matrix $\aH{\bfA} \in \bbH^{4p\times 4n}$ exhibits a specific band structure.
To see this, let us describe $\aR{\bfA}$ in terms of $p\times n$ real-valued blocks:
\begin{equation}
  \aR{\bfA} \triangleq
  \begin{bmatrix}
    \bfA_{11}^{\calR} & \bfA_{12}^{\calR} & \bfA_{13}^{\calR} & \bfA_{14}^{\calR}    \\
    \bfA_{21}^{\calR} & \bfA_{22}^{\calR} & \bfA_{23}^{\calR} & \bfA_{24}^{\calR}    \\
    \bfA_{31}^{\calR} & \bfA_{32}^{\calR} & \bfA_{33}^{\calR} & \bfA_{34}^{\calR}    \\
    \bfA_{41}^{\calR} & \bfA_{42}^{\calR} & \bfA_{43}^{\calR} & \bfA_{44}^{\calR}\:.
  \end{bmatrix}
\end{equation}{}
Then, computations yield
\begin{align}
  \bfJ_p \aR{\bfA}
   & = \begin{bmatrix}
    \tilde{\bfA}_{\cdot 1}        & \tilde{\bfA}_{\cdot 2}        & \tilde{\bfA}_{\cdot 3}        & \tilde{\bfA}_{\cdot 4}        \\
    \tilde{\bfA}_{\cdot 1}^{\bmi} & \tilde{\bfA}_{\cdot 2}^{\bmi} & \tilde{\bfA}_{\cdot 3}^{\bmi} & \tilde{\bfA}_{\cdot 4}^{\bmi} \\
    \tilde{\bfA}_{\cdot 1}^{\bmj} & \tilde{\bfA}_{\cdot 2}^{\bmj} & \tilde{\bfA}_{\cdot 3}^{\bmj} & \tilde{\bfA}_{\cdot 4}^{\bmj} \\
    \tilde{\bfA}_{\cdot 1}^{\bmk} & \tilde{\bfA}_{\cdot 2}^{\bmk} & \tilde{\bfA}_{\cdot 3}^{\bmk} & \tilde{\bfA}_{\cdot 4}^{\bmk}
  \end{bmatrix}
\end{align}{}
where $\tilde{\bfA}_{\cdot j} = \bfA_{1j}^{\calR} + \bmi \bfA_{2j}^{\calR} + \bmj\bfA_{3j}^{\calR} + \bmk \bfA_{4j}^{\calR} \in \bbH^{p \times n}$ for $j = 1, 2, 3, 4$.
Next, define four quaternion matrices $\bfA_1, \bfA_2, \bfA_3, \bfA_4 \in \bbH^{p\times n}$ such that
\begin{align}
  {\bfA}_1 & = \frac{1}{4} \left(\tilde{\bfA}_{\cdot 1} - \tilde{\bfA}_{\cdot 2}\bmi - \tilde{\bfA}_{\cdot 3}\bmj - \tilde{\bfA}_{\cdot 4}\bmk\right)     \\
  {\bfA}_2 & =  \frac{1}{4} \left(\tilde{\bfA}_{\cdot 1} - \tilde{\bfA}_{\cdot 2}\bmi + \tilde{\bfA}_{\cdot 3}\bmj + \tilde{\bfA}_{\cdot 4}\bmk\right)    \\
  {\bfA}_3 & =  \frac{1}{4} \left(\tilde{\bfA}_{\cdot 1} + \tilde{\bfA}_{\cdot 2}\bmi - \tilde{\bfA}_{\cdot 3}\bmj + \tilde{\bfA}_{\cdot 4}\bmk\right)    \\
  {\bfA}_4 & =  \frac{1}{4} \left(\tilde{\bfA}_{\cdot 1} + \tilde{\bfA}_{\cdot 2}\bmi + \tilde{\bfA}_{\cdot 3}\bmj - \tilde{\bfA}_{\cdot 4}\bmk\:\right).
\end{align}
Then, the matrix $\aH{\bfA} \triangleq \frac{1}{4}\bfJ_p \aR{\bfA} \bfJ^\herm_n$ explicitly reads:
\begin{align}
  \aH{\bfA} = \begin{bmatrix}
    {\bfA}_1               & {\bfA}_2               & {\bfA}_3               & {\bfA}_4                 \\
    \invol{{\bfA}}{\bmi}_2 & \invol{{\bfA}}{\bmi}_1 & \invol{{\bfA}}{\bmi}_4 & \invol{{\bfA}}{\bmi}_3   \\
    \invol{{\bfA}}{\bmj}_3 & \invol{{\bfA}}{\bmj}_4 & \invol{{\bfA}}{\bmj}_1 & \invol{{\bfA}}{\bmj}_2   \\
    \invol{{\bfA}}{\bmk}_4 & \invol{{\bfA}}{\bmk}_3 & \invol{{\bfA}}{\bmk}_2 & \invol{{\bfA}}{\bmk}_1\:
  \end{bmatrix}\label{eq:structuredBandMatrixA}
\end{align}

A careful inspection of the expressions above shows that there exists a one-to-one mapping between quaternion matrices $\bfA_1, \bfA_2, \bfA_3, \bfA_4$ and the real matrix blocks $\bfA^{\calR}_{ij}$ that define $\aR{\bfA}$.
To obtain the corresponding constraint in the quaternion domain, remark that each line of the system of equations \eqref{eq:affineEquationHnHp} corresponds to a block of $p$-equations.
Writing \eqref{eq:affineEquationHnHp} explictly, we see that the second, third and fourth $p$-blocks are simply canonical involutions of the first block of equations.
As a result, this shows that \eqref{eq:affineEquationHnHp} is equivalent to the following \emph{widely affine} quaternion constraint
\begin{equation}
  \bfA_1 \bfq + \bfA_2 \invol{\bfq}{\bmi} + \bfA_3 \invol{\bfq}{\bmj} +  \bfA_4 \invol{\bfq}{\bmk} = \bfb\:.\tag{$\calA$}\label{eq:affineConstraintQuaternion}
\end{equation}
The term \emph{widely} refers to the fact that $\bfq$ and its three canonical involutions $\invol{\bfq}{\bmi} , \invol{\bfq}{\bmj}$ and $\invol{\bfq}{\bmk}$ are necessary to describe the most general affine constraint in the corresponding augmented real space.
In particular, strictly affine quaternion constraints like $\bfA \bfq = \bfb$ -- which are commonly found in the existing literature -- are only a special case of widely affine constraints described by \eqref{eq:affineConstraintQuaternion}.
Focusing on this special case, stricly affine quaternion constraints $\bfA\bfq = \bfb$ impose a peculiar structure on the associated real matrix $\aR{\bfA}$.
The matrix $\aH{\bfA}$ becomes diagonal, and letting $\bfA = \bfA_a + \bfA_b \bmi + \bfA_c \bmj + \bfA_d \bmk$, with $\bfA_a, \bfA_b, \bfA_c, \bfA_d \in \bbR^{p\times n}$ one obtains
\begin{align}
  \aR{\bfA} & = \frac{1}{4}\bfJ_p^\herm\diag(\bfA, \invol{\bfA}{\bmi},\invol{\bfA}{\bmj},\invol{\bfA}{\bmk})  \bfJ_n \:, \\
            & = \begin{bmatrix}
    \bfA_a & -\bfA_b & -\bfA_c  & - \bfA_d \\
    \bfA_b & \bfA_a  & - \bfA_d & \bfA_c   \\
    \bfA_c & \bfA_d  & \bfA_a   & -\bfA_b  \\
    \bfA_d & -\bfA_c & \bfA_b   & \bfA_a
  \end{bmatrix}\:,
\end{align}
hence $\aR{\bfA}$ is a real structured matrix of size $4p\times 4n$.

\section{Optimization of real-valued functions\\ of quaternion variables}
\label{sec:generalizedHR}

Given a real-valued function of quaternion variables (e.g. a cost function), is it possible to define quaternion derivatives and if so, how can we compute them?
One key obstacle lies in the non-analytic nature of real-valued functions: this means, in particular, that such functions are not quaternion differentiable \cite{sudbery1979quaternionic,watson2003generalized} and that other strategies need to be developed.

First, a pseudo-derivative approach can be used by treating a function $f$ of the variable $\bfq \in \bbH^n$ as a function of its four real components $\bfq_a, \bfq_b, \bfq_c, \bfq_c$ -- however, as the compactness of quaternion expressions is lost, such an approach may  require tedious and cumbersome computations.
Alternatively, the recent advent of (generalized) $\bbH\bbR$-calculus \cite{mandic2011quaternion,xu_enabling_2015} paved the way to efficient computation of quaternion-domain derivatives.
It provides a complete framework generalizing the $\bbC\bbR$-calculus \cite{kreutz-delgado_complex_2009} of complex-valued optimization to the case of quaternion functions.
Generalized $\bbH\bbR$-calculus is one of the key ingredients of the proposed framework for constrained quaternion optimization.
This section covers the fundamental definitions and properties, focusing on practical aspects.
For detailed proofs and discussions, we refer the reader to the pioneering papers \cite{xu2015theory,xu_enabling_2015,xu2016optimization}.

\subsection{Generalized $\bbH\bbR$-derivatives for cost functions}

We first consider the simpler case of a univariate function $f: \bbH \rightarrow \bbR$.
The function $f$ is said to be real-differentiable \cite{sudbery_quaternionic_1979} if the function $f(q) = f(q_a, q_b, q_c, q_d) = f(\aR{q})$ is differentiable with respect to the real variables $q_a, q_b, q_c, q_d$.
Generalized $\bbH\bbR$ (GHR)-derivatives  are defined in terms of standard real-domain derivatives as follows.
\begin{definition}[Generalized $\bbH\bbR$-derivatives \cite{xu_enabling_2015}]
  Let $\mu$ be a nonzero quaternion.
  The GHR derivatives of a real-differentiable $f:\bbH\rightarrow \bbR$ with respect to $\invol{q}{\mu}$ and $\conjinvol{q}{\mu}$ are
  \begin{align}
    \pdiff{f}{\invol{q}{\mu}}     & = \frac{1}{4}\left(\pdiff{f}{q_a} -\pdiff{f}{q_b}\invol{\bmi}{\mu} -\pdiff{f}{q_c}\invol{\bmj}{\mu} - \pdiff{f}{q_d}\invol{\bmk}{\mu}\right)\label{eq:GHRderivativeUnivariate}      \\
    \pdiff{f}{\conjinvol{q}{\mu}} & = \frac{1}{4}\left(\pdiff{f}{q_a} + \pdiff{f}{q_b}\invol{\bmi}{\mu} +\pdiff{f}{q_c}\invol{\bmj}{\mu} + \pdiff{f}{q_d}\invol{\bmk}{\mu}\right)\label{eq:GHRderivativeConjUnivariate}
  \end{align}
\end{definition}
The term \emph{generalized} refers to the use of an arbitrary quaternion rotation encoded by $\mu \neq 0$ in expressions \eqref{eq:GHRderivativeUnivariate}--\eqref{eq:GHRderivativeConjUnivariate}.
This is necessary to ensure that GHR calculus can be equipped with product and chain rules (see \cite{xu_enabling_2015} for details and further properties).
Since $f$ is real-valued, its GHR derivatives enjoy several nice properties, such as the conjugate rule
\begin{equation}
  \left(\pdiff{f}{\invol{q}{\mu}}\right)^\ast  = \pdiff{f}{\conjinvol{q}{\mu}}, \quad     \left(\pdiff{f}{\conjinvol{q}{\mu}}\right)^\ast  = \pdiff{f}{\invol{q}{\mu}}\:,\label{eq:conjugateRule}
\end{equation}
together with a special instance of the rotation rule \cite{xu_enabling_2015}
\begin{equation}
  \pdiff{f}{\invol{q}{\nu}} = \invol{\left(\pdiff{f}{q}\right)}{\nu}, \quad \pdiff{f}{\conjinvol{q}{\nu}} = \invol{\left(\pdiff{f}{\conj{q}}\right)}{\nu}\:.
\end{equation}
for $\nu \in \lbrace 1, \bmi, \bmj, \bmk\rbrace$.

\subsection{Quaternion gradient and stationary points}
\label{sub:quaternionGradients}

Consider now a real-valued function $f: \bbH^n \rightarrow \bbR$ of the quaternion vector variable $\bfq = (q_1, q_2, \ldots q_n)^\transp \in \bbH^n$.
We assume that $f$ is real-differentiable, that is, is real-differentiable with respect to each vector component $q_i$, $i=1, 2, \ldots, n$.
The $\mu$-gradient operator and $\mu$-conjugated gradient operators are defined in terms of GHR derivatives as follows \cite{xu2016optimization}:
\begin{align}
  \nabla_{\invol{\bfq}{\mu}} f     & \triangleq  \left(\pdiff{f}{\invol{q_1}{\mu}}, \pdiff{f}{\invol{{q_2}}{\mu}}, \ldots, \pdiff{f}{\invol{q_n}{\mu}} \right)^\transp\in \bbH^{n},           \\
  \nabla_{\conjinvol{\bfq}{\mu}} f & \triangleq \left(\pdiff{f}{\conjinvol{q_1}{\mu}}, \pdiff{f}{\conjinvol{q_2}{\mu}}, \ldots, \pdiff{f}{\conjinvol{q_n}{\mu}} \right)^\transp \in \bbH^{n}.
\end{align}
Remark immediatly that since $f$ is real-valued, the conjugate rule \eqref{eq:conjugateRule} implies that $\nabla_{\invol{\bfq}{\mu}} f = \left(\nabla_{\conjinvol{\bfq}{\mu}} f\right)^*$.
When $\mu = 1$, we simply call $\gradQ f$ (resp. $\conjgradQ f$) the quaternion gradient of $f$ (resp. conjugated quaternion gradient of $f$).
Choosing the canonical involutions $\mu \in \lbrace 1, \bmi, \bmj, \bmk\rbrace$, we define the augmented quaternion gradient and conjugated augmented quaternion gradient as
\begin{equation}
  \gradH f \triangleq \begin{pmatrix} \nabla_{\bfq} f \\ \nabla_{\invol{\bfq}{\bmi}} f\\  \nabla_{\invol{\bfq}{\bmj}} f\\ \nabla_{\invol{\bfq}{\bmk}} f\end{pmatrix}, \:
  \conjgradH f \triangleq \begin{pmatrix} \nabla_{\conj{\bfq}} f \\ \nabla_{\conjinvol{\bfq}{\bmi}} f\\  \nabla_{\conjinvol{\bfq}{\bmj}} f\\ \nabla_{\conjinvol{\bfq}{\bmk}} f\end{pmatrix} \in \bbH^{4n}\:.
\end{equation}
Introducing the (standard) augmented real gradient  operator as $\gradR \triangleq (\nabla_{\bfq_a}^\transp,\nabla_{\bfq_b}^\transp, \nabla_{\bfq_c}^\transp, \nabla_{\bfq_d}^\transp)^\transp$ and exploiting the definition of generalized $\bbH\bbR$ derivatives \eqref{eq:GHRderivativeUnivariate}--\eqref{eq:GHRderivativeConjUnivariate}, one obtains
\begin{align}
  \gradH f & = \frac{1}{4} \begin{bmatrix}
    \bfI_n & -\bmi\bfI_n & -\bmj\bfI_n & -\bmk \bfI_n \\
    \bfI_n & -\bmi\bfI_n & \bmj\bfI_n  & \bmk\bfI_n   \\
    \bfI_n & \bmi\bfI_n  & -\bmj\bfI_n & \bmk \bfI_n  \\
    \bfI_n & \bmi\bfI_n  & \bmj\bfI_n  & -\bmk \bfI_n \\
  \end{bmatrix}\begin{bmatrix}\nabla_{\bfq_a} f \\\nabla_{\bfq_b} f\\\nabla_{\bfq_c} f\\
    \nabla_{\bfq_d} f\end{bmatrix}\nonumber \\
           & = \frac{1}{4}\conj{\bfJ_n}\gradR f\:,\label{eq:gradHtogradR}
\end{align}
and
\begin{align}
  \conjgradH f & = \frac{1}{4} \begin{bmatrix}
    \bfI_n & \bmi\bfI_n  & \bmj\bfI_n  & \bmk \bfI_n  \\
    \bfI_n & \bmi\bfI_n  & -\bmj\bfI_n & -\bmk\bfI_n  \\
    \bfI_n & -\bmi\bfI_n & \bmj\bfI_n  & -\bmk \bfI_n \\
    \bfI_n & -\bmi\bfI_n & -\bmj\bfI_n & \bmk \bfI_n  \\
  \end{bmatrix}\begin{bmatrix}\nabla_{\bfq_a} f\\\nabla_{\bfq_b} f\\\nabla_{\bfq_c} f\\\nabla_{\bfq_d} f\end{bmatrix} \nonumber \\
               & = \frac{1}{4}\bfJ_n\gradR f\:.\label{eq:gradHconjtogradR}
\end{align}
In particular, the real augmented and conjugated augmented quaternion gradients are related by a simple linear transform:
\begin{equation}
  \gradR f = \bfJ^\herm_n\conjgradH f \:.\label{eq:gradRtogradHconj}
\end{equation}
This result is fundamental for the proposed quaternion optimization framework since it permits to switch from one representation of quaternion vectors to another while preserving gradient-related properties.
Notably, it allows to derive necessary and sufficient conditions for stationary points of real-valued functions of quaternion variables.
\begin{proposition}[Stationary points]
  Let $f : \bbH^n \rightarrow \bbR$ be real-differentiable and let $\mu \in \lbrace 1, \bmi, \bmj, \bmk\rbrace$.
  The vector $\bfq_\star \in \bbH^n$ is a stationary point of $f$ iff
  \begin{equation}
    \begin{split}
      \nabla_{\invol{\bfq}{\mu}} f (\bfq_\star) = 0 \Leftrightarrow \nabla_{\conjinvol{\bfq}{\mu}} f (\bfq_\star) = 0 \Leftrightarrow \gradR f (\aR{{\bfq_\star}}) = 0\\
      \Leftrightarrow \ \gradH f (\aH{{\bfq_\star}}) = 0 \Leftrightarrow \conjgradH f (\aH{{\bfq_\star}}) = 0\:.
    \end{split}
  \end{equation}
  \label{prop:stationaryPoints}
\end{proposition}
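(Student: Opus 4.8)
The plan is to anchor every condition listed in the proposition to the single relation $\gradR f(\aR{{\bfq_\star}}) = 0$, which is precisely the classical definition of a stationary point of $f$ viewed as a real-differentiable function of the $4n$ real components collected in $\aR{\bfq}$. All remaining conditions will then be shown equivalent to this one, so that the whole list collapses into a chain of bi-implications. The two ingredients I would invoke are the linear correspondences \eqref{eq:gradHtogradR}--\eqref{eq:gradRtogradHconj} between the real and (conjugated) augmented quaternion gradients, and the elementary properties of GHR derivatives recalled above, namely the conjugate rule \eqref{eq:conjugateRule} and the special instance of the rotation rule.

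First I would dispatch the augmented-representation equivalences, which are pure linear algebra. Since $\bfJ_n$ is invertible with $\bfJ_n^{-1} = \tfrac14\bfJ_n^\herm$, relation \eqref{eq:gradRtogradHconj}, namely $\gradR f = \bfJ_n^\herm \conjgradH f$, shows at once that $\gradR f(\aR{{\bfq_\star}}) = 0 \Leftrightarrow \conjgradH f(\aH{{\bfq_\star}}) = 0$, since multiplication by an invertible matrix preserves the zero vector. The conjugate rule \eqref{eq:conjugateRule}, applied blockwise, states that $\gradH f$ and $\conjgradH f$ are entrywise quaternion conjugates of one another; hence one vanishes iff the other does, giving $\conjgradH f = 0 \Leftrightarrow \gradH f = 0$. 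This settles the equivalences among the conditions $\gradR f = 0$, $\gradH f(\aH{{\bfq_\star}}) = 0$ and $\conjgradH f = 0$ without further computation.

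It remains to connect the single-$\mu$ gradients $\nabla_{\invol{\bfq}{\mu}} f$ and $\nabla_{\conjinvol{\bfq}{\mu}} f$, for a fixed $\mu \in \lbrace 1, \bmi, \bmj, \bmk\rbrace$, to this common condition. The conjugate rule again yields $\nabla_{\invol{\bfq}{\mu}} f = 0 \Leftrightarrow \nabla_{\conjinvol{\bfq}{\mu}} f = 0$, so it suffices to treat $\nabla_{\invol{\bfq}{\mu}} f$. Here I would use the rotation rule in its entrywise vector form $\nabla_{\invol{\bfq}{\mu}} f = \invol{(\gradQ f)}{\mu}$: because the map $q \mapsto \invol{q}{\mu} = \mu q \mu^{-1}$ is a bijection of $\bbH$ (a rotation, hence invertible), applying it entrywise to $\gradQ f$ annihilates no nonzero component, so that $\invol{(\gradQ f)}{\mu} = 0$ holds iff $\gradQ f = 0$. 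Consequently $\nabla_{\invol{\bfq}{\mu}} f = 0 \Leftrightarrow \gradQ f = 0$ for every admissible $\mu$; and since the four blocks of $\gradH f$ are exactly the $\nabla_{\invol{\bfq}{\nu}} f$ for $\nu \in \lbrace 1, \bmi, \bmj, \bmk\rbrace$, they all vanish simultaneously with $\gradQ f$, closing the loop with the augmented condition $\gradH f = 0$.

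The one point requiring care -- and the step I expect to be the crux -- is this last equivalence for a single, arbitrary $\mu$: the statement asserts that stationarity can be certified by the vanishing of just one $\mu$-gradient, not of all four at once. The rotation rule is exactly what bridges this gap, recasting the four a priori distinct conditions $\nabla_{\invol{\bfq}{\mu}} f = 0$ as restatements of the single condition $\gradQ f = 0$; once this is in place, assembling the full chain $\nabla_{\invol{\bfq}{\mu}} f = 0 \Leftrightarrow \nabla_{\conjinvol{\bfq}{\mu}} f = 0 \Leftrightarrow \gradR f = 0 \Leftrightarrow \gradH f = 0 \Leftrightarrow \conjgradH f = 0$ is immediate. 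I would finally remark that each equivalence is evaluated at the same point, the representations $\aR{{\bfq_\star}}$, $\aH{{\bfq_\star}}$ and $\bfq_\star$ being in one-to-one correspondence, so that no ambiguity arises in speaking of \emph{the} stationary point $\bfq_\star$.
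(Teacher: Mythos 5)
Your proof is correct and follows essentially the same route as the paper's: both anchor the chain at $\gradR f(\aR{{\bfq_\star}}) = 0$, use the invertible linear relations \eqref{eq:gradHtogradR}--\eqref{eq:gradRtogradHconj} to handle the augmented gradients, and invoke the rotation rule (with the conjugate rule) to tie the single-$\mu$ gradients to $\gradQ f$. Your treatment is if anything slightly more explicit than the paper's on the point that vanishing of \emph{one} $\mu$-gradient suffices, via the bijectivity of the involution $q \mapsto \invol{q}{\mu}$, which the paper leaves implicit in its ``similarly'' step.
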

\begin{proof}
  Let $\bfq_\star \in \bbH^n$ and define $\aR{{\bfq_\star}}$ and $\aH{{\bfq_\star}}$ its augmented real and quaternion vectors.
  Suppose that $\gradR f(\aR{{\bfq_\star}}) = 0$.
  By Eqs. \eqref{eq:gradHtogradR}--\eqref{eq:gradHconjtogradR} one has
  $$\gradR f (\aR{{\bfq_\star}}) = 0 \Leftrightarrow \gradH f (\aH{{\bfq_\star}}) = 0 \Leftrightarrow \conjgradH f (\aH{{\bfq_\star}}) = 0\:.$$
  Clearly, by definition of $\gradH$ one has $ \gradH f (\aH{{\bfq_\star}}) = 0  \Rightarrow  \gradQ f (\bfq_\star) = 0$.
  Conversely, suppose that $\gradQ f (\bfq_\star) = 0$. Since $f$ is real-valued, one has $\nabla_{\invol{\bfq}{\mu}} f = \invol{(\nabla_{\bfq} f)}{\mu}$ for $\mu \in \lbrace 1, \bmi, \bmj, \bmk\rbrace$, so that $\gradQ f (\bfq_\star) = 0 \Rightarrow \gradH f (\aH{{\bfq_\star}}) = 0$.
  Similarly one shows that $\conjgradQ f (\bfq_\star) = 0 \Leftrightarrow \conjgradH f (\aH{{\bfq_\star}}) = 0$, which concludes the proof.
\end{proof}
Proposition \ref{prop:stationaryPoints} has a very important consequence: it states that optimization problems involving quaternion variables can be equivalently tackled in any representation: $\bbH^n$, $\calR$ or $\calH$.
This equivalence allows to move back-and-forth between the three representations and to benefit from the advantages of each.
This result is a cornerstone for the proposed framework for quaternion convex optimization detailed in the remaining of this paper.

\section{Convex optimization \\with quaternion variables}
\label{sec:convexoptimH}
This section starts by introducing the notion of convex sets and convex functions in the quaternion domain.
Then we introduce the most general form for a constrained quaternion convex problem by leveraging the equivalent augmented real optimization problem.
The notion of Lagrangian and duality are introduced next, which enables the formulation of two fundamental optimality conditions.
Some of these definitions may appear trivial to the reader familiar with the convex optimization field: yet, in our opinion, explicit and rigorous definitions are necessary to ensure the soundness of the proposed framework for quaternion convex optimization.

\subsection{Convex sets and convex functions}
Definitions of convexity for quaternion sets of $\bbH^n$ or for a real-valued function of quaternions variables $f:\bbH^n \rightarrow \bbR$ appear very close to the standard real case.
This is essentially due to the fact that convexity is intrisically a ``real'' property, so that convexity in the quaternion domain is inherited from convexity of the equivalent, real-augmented representation.\\

\noindent\textbf{Convex sets. }
Let $\calC \subset \bbH^n$ and define $\aR{\calC} = \lbrace (\bfq_a^\transp, \bfq_b^\transp, \bfq_c^\transp, \bfq_d^\transp)^\transp \in \bbR^{4n} \mid \bfq \in \calC\rbrace$ its augmented real representation.
We say that $\calC$ is a convex set (resp. cone, convex cone) of $\bbH^n$ if $\aR{\calC}$ is a convex set (resp. cone, convex cone) of $\bbR^{4n}$.
This leads to the following explicit definitions.
\begin{definition}[Convex set]
  A set $\calC \subset \bbH^n$ is convex if $\forall\:\bfp, \bfq \in \calC$ and any $\theta \in [0, 1]$, one has $\theta \bfp + (1-\theta) \bfq \in \calC$.
\end{definition}

A similar definition is possible for cones and convex cones of $\bbH^n$.
\begin{definition}[Cone and convex cone]
  A set $\calC \subset \bbH^n$ is a cone if $\forall \: \bfq \in \calC$ and $\theta \geq 0$, $\theta \bfq \in \calC$.
  A set $\calC$ is a convex cone if it is convex and a cone, which means that $\forall\:\bfp, \bfq \in \calC$ and any $\theta_1, \theta_2 \geq 0$ we have $\theta_1 \bfp + \theta_2 \bfq \in \calC$.
\end{definition}

\begin{remark}
  Given a convex set $\calC \subset \bbH^n$ (resp. convex cone),
  $$\aH{\calC} \triangleq \left\lbrace (\bfq, \invol{\bfq}{\bmi}, \invol{\bfq}{\bmj}, \invol{\bfq}{\bmk})^\transp \in \bbH^{4n} \mid \bfq \in \calC  \right\rbrace$$
  is a convex set (resp. convex cone) of $\bbH^{4n}$. The converse is also true.
\end{remark}
Remaining definitions such as convex hull, dual cone, etc. for the quaternion domain are omitted for brevity.
They can be obtained if desired, by proceeding analogously and exploiting equivalence with the augmented real representation.

\noindent\textbf{Convex functions. }
Similarly to the definition of convex sets of quaternions, the definition of convexity of real-valued functions of quaternion variables relies on convexity of the associated function in terms of augmented real-variables.

\begin{definition}[Convex function]
  A function $f: \bbH^n \rightarrow \bbR$ is convex if its domain $\dom f$ is convex and if for all $\bfp, \bfq \in \dom f$ and for $\theta \in [0, 1]$ one has
  \begin{equation}
    f\left(\theta \bfp + (1-\theta)\bfq\right) \leq \theta f(\bfp) + (1-\theta)f(\bfq)
  \end{equation}
  A function $f$ is strictly convex if the above inequality is strict whenever $\bfp \neq \bfq$ and $\theta \in (0, 1)$.
\end{definition}
Remark that if $f(\bfq)$ is convex, the function $f(\aH{\bfq})$ is also convex, and conversely.

In practice, supposing that $f$ is real-differentiable it is possible to characterize convexity in terms of quaternion gradients introduced in Section \ref{sub:quaternionGradients}.
\begin{proposition}[First-order characterization] \label{prop:firstOrderConvexCharacterization}
  Consider $f:\bbH^n \rightarrow \bbR$ a real-differentiable function such that $\dom f$ is convex. Then $f$ is convex if and only if $\forall\: \bfp, \bfq \in \dom f$
  \begin{align}
    f(\bfp)                            & \geq f(\bfq) + 4 \real \left(\conjgradQ f(\bfq)^\herm (\bfp-\bfq)\right) \\
    \Longleftrightarrow  f(\aH{\bfp})  & \geq f(\aH{\bfq}) + \conjgradH f(\aH{\bfq})^\herm (\aH{\bfp}-\aH{\bfq})  \\
    \Longleftrightarrow   f(\aR{\bfp}) & \geq f(\aR{\bfq}) + \gradR f(\aR{\bfq})^\transp(\aR{\bfp}-\aR{\bfq})\:.
  \end{align}
\end{proposition}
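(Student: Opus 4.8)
The plan is to reduce the whole statement to the classical first-order characterization of convexity on $\bbR^{4n}$ and then transport it losslessly through the three representations. By the definition of quaternion convexity adopted above, $f$ is convex if and only if the real function $\aR{\bfq}\mapsto f(\aR{\bfq})$ is convex on the convex set $\aR{\dom f}\subset\bbR^{4n}$; moreover, real-differentiability of $f$ is by definition differentiability of this real function. Hence the standard real result applies verbatim and yields that $f$ is convex if and only if the third inequality, $f(\aR{\bfp})\geq f(\aR{\bfq})+\gradR f(\aR{\bfq})^\transp(\aR{\bfp}-\aR{\bfq})$, holds for all $\bfp,\bfq\in\dom f$. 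It then remains to prove that the three displayed inequalities are the same inequality written in different variables: the left-hand sides coincide because $f(\bfp)=f(\aR{\bfp})=f(\aH{\bfp})$ under the one-to-one identifications of Section \ref{sub:representationQuaternionVectors}, so I only need to match the three linear (inner-product) terms.

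First I would match the $\calH$- and $\calR$-terms. Using the linearity of $\aR{\bfq}\mapsto\aH{\bfq}=\bfJ_n\aR{\bfq}$ one has $\aH{\bfp}-\aH{\bfq}=\bfJ_n(\aR{\bfp}-\aR{\bfq})$, and substituting $\conjgradH f=\tfrac14\bfJ_n\gradR f$ from \eqref{eq:gradHconjtogradR} together with $\bfJ^\herm_n\bfJ_n=4\bfI_{4n}$ (recall $\bfJ^{-1}_n=\tfrac14\bfJ^\herm_n$) gives
\[
  \conjgradH f(\aH{\bfq})^\herm(\aH{\bfp}-\aH{\bfq})=\tfrac14(\gradR f)^\transp\bfJ^\herm_n\bfJ_n(\aR{\bfp}-\aR{\bfq})=(\gradR f)^\transp(\aR{\bfp}-\aR{\bfq}),
\]
where $\gradR f$ being real lets me replace $\herm$ by $\transp$. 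Equivalently, this is Proposition \ref{prop:conservationInnerProduct} applied to the pair $(\conjgradH f,\aH{\bfp}-\aH{\bfq})$, both of which lie in $\calH$; it is essentially the content of \eqref{eq:gradRtogradHconj}.

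The remaining, and most delicate, step is to match the $\bbH^n$-term with the $\calH$-term. Expanding the block inner product, $\conjgradH f(\aH{\bfq})^\herm(\aH{\bfp}-\aH{\bfq})$ equals the sum over $\mu\in\lbrace 1,\bmi,\bmj,\bmk\rbrace$ of $(\nabla_{\conjinvol{\bfq}{\mu}}f)^\herm\invol{(\bfp-\bfq)}{\mu}$. The rotation rule gives $\nabla_{\conjinvol{\bfq}{\mu}}f=\invol{(\conjgradQ f)}{\mu}$, so each summand is $(\invol{(\conjgradQ f)}{\mu})^\herm\invol{(\bfp-\bfq)}{\mu}$. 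The crux, where quaternion noncommutativity must be handled with care, is the entrywise involution identity $(\invol{\bfu}{\mu})^\herm\invol{\bfv}{\mu}=\invol{(\bfu^\herm\bfv)}{\mu}$, which follows from $\conjinvol{q}{\mu}=\conj{(\invol{q}{\mu})}$ and $\invol{(ab)}{\mu}=\invol{a}{\mu}\invol{b}{\mu}$. Summing over the four canonical involutions and invoking the first line of \eqref{eq:involution2real}, namely $q+\invol{q}{\bmi}+\invol{q}{\bmj}+\invol{q}{\bmk}=4\real q$, collapses the sum to $4\real(\conjgradQ f(\bfq)^\herm(\bfp-\bfq))$, which is exactly the $\bbH^n$-term. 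This is precisely the computation underlying Proposition \ref{prop:conservationInnerProduct}, now carried out with $\conjgradQ f$ in place of one of the vectors. Since all three linear terms are real and pairwise equal, the three inequalities are identical, and the chain of equivalences with convexity is established.
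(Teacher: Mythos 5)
Your proof is correct and follows essentially the same route as the paper's: invoke the classical real-domain first-order characterization for $f(\aR{\cdot})$, convert the $\calR$ inner-product term to the $\calH$ term via the relation between $\gradR f$ and $\conjgradH f$ (with $\bfJ^\herm_n\bfJ_n=4\bfI_{4n}$), and collapse the $\calH$ term to the $\bbH^n$ term via the inner-product conservation of Proposition \ref{prop:conservationInnerProduct}, using the rotation rule to identify $\conjgradH f$ as the augmented vector of $\conjgradQ f$. The only difference is cosmetic: you re-derive the involution-sum computation underlying Proposition \ref{prop:conservationInnerProduct} explicitly, whereas the paper cites it directly.
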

\begin{proof}
  Suppose $\dom f$ convex and $f$ real-differentiable.
  Then the usual convexity condition \cite[Chapter 3]{boyd_convex_2004} on $f$ reads in terms of real augmented variables
  \begin{equation*}
    \text{$f$ is convex} \Leftrightarrow \begin{cases}
      \forall\: \bfp, \bfq \in \dom f, \\
      f(\aR{\bfp}) \geq f(\aR{\bfq}) + \gradR f(\aR{\bfq})^\transp(\aR{\bfp}-\aR{\bfq})
    \end{cases}.
  \end{equation*}
  Using \eqref{eq:gradRtogradHconj}, the inner product term can be rewritten as
  \begin{align*}
    \gradR f(\aR{\bfq})^\transp(\aR{\bfp}-\aR{\bfq}) & = \gradR f(\aR{\bfq})^\herm(\aR{\bfp}-\aR{\bfq})                                        \\
                                                     & = \left(\bfJ^\herm \conjgradH f (\aH{\bfq}) \right)^\herm\bfJ^{-1}(\aH{\bfp}-\aH{\bfq}) \\
                                                     & = \conjgradH f (\aH{\bfq})^\herm (\aH{\bfp}-\aH{\bfq})\:,
  \end{align*}
  which yields the second equivalency.
  To obtain the result in $\bfq, \bfp$ coordinates, remark that $ \conjgradH f (\aH{\bfq})$ is the quaternion augmented vector of $\nabla_{\conjinvol{\bfq}{}}f(\bfq)$, so that by Proposition \ref{prop:conservationInnerProduct}
  \begin{align*}
    \conjgradH f (\aH{\bfq})^\herm (\aH{\bfp}-\aH{\bfq}) = 4 \real \left(\conjgradQ f(\bfq)^\herm (\bfp-\bfq)\right)
  \end{align*}
  which concludes the proof.
\end{proof}
\textbf{Example. }
Consider the function $f: \bbH^n \rightarrow \bbR$ defined by
\begin{equation}
  f(\bfq) = \Vert \bfP_1 \bfq + \bfP_2 \invol{\bfq}{\bmi} + \bfP_3 \invol{\bfq}{\bmj} +  \bfP_4 \invol{\bfq}{\bmk} - \bfb \Vert^2_{2}\:,
\end{equation}
where $\bfP_1, \bfP_2, \bfP_3,\bfP_4 \in \bbH^{p\times n}$ and $\bfb \in \bbH^p$ are arbitrary.
First, note that $\dom f = \bbH^n$ is convex.
From Proposition \ref{prop:conservationInnerProduct}, $f(\bfq) = f(\aR{\bfq}) = \Vert \aR{\bfP}\aR{\bfq} - \aR{\bfp}\Vert_{2}^2$; $f$ is a convex function of the augmented real variable $\aR{\bfq}$, so $f$ is convex in $\bfq$.

For brevity, properties of quaternion convex functions such as closedness, properness, etc. are omitted. They can be defined without difficulty just like above, by exploiting the augmented real representation.

\subsection{Convex problems}
\label{sub:convexProblems}

The most general form of quaternion constrained convex optimization problems consists in  the minimization of a convex function subject to inequality constraints defined by convex functions and to widely affine equality constraints.
Formally,
\begin{equation}
  \begin{split}
    \text{minimize } & f_0(\bfq)\\
    \text{subject to } & f_i(\bfq) \leq 0, \: i = 1, \ldots, m\\
    & \bfA_1\bfq + \bfA_2\invol{\bfq}{\bmi} + \bfA_3\invol{\bfq}{\bmj}+ \bfA_4\invol{\bfq}{\bmk} = \bfb
  \end{split}\tag{$P$}\label{eq:GenericOptimH}
\end{equation}
where $f_0, \ldots f_m: \bbH^n \rightarrow \bbR$ are real-valued convex functions and where $\bfA_1, \bfA_2, \bfA_3, \bfA_4 \in \bbH^{p\times n}$ and $\bfb \in \bbH^{p}$ encode $p$ quaternion widely affine equality constraints.
This particular type of equality constraints -- specific to quaternion algebra -- ensures that \eqref{eq:GenericOptimH} defines the most general form of constrained convex quaternion optimization problems.
Using results from Section \ref{sub:affine}, the problem \eqref{eq:GenericOptimH} can be equivalently rewritten in terms of the augmented quaternion variable $\aH{\bfq} \in \calH \subset \bbH^{4n}$ as follows:
\begin{equation}
  \begin{split}
    \text{minimize } & f_0(\aH{\bfq})\\
    \text{subject to } & f_i(\aH{\bfq}) \leq 0, \: i = 1, \ldots, m\\
    & \aH{\bfA}\aH{\bfq} = \aH{\bfb}
  \end{split}\:,\tag{$P_{\calH}$}\label{eq:GenericOptimaH}
\end{equation}
where $\aH{\bfA} \in \bbH^{4p\times 4n}$ is the structured quaternion matrix given by \eqref{eq:structuredBandMatrixA}.
Similarly, one can obtain the equivalent constrained convex problem in real augmented variables as
\begin{equation}
  \begin{split}
    \text{minimize } & f_0(\aR{\bfq})\\
    \text{subject to } & f_i(\aR{\bfq}) \leq 0, \: i = 1, \ldots, m\\
    & \aR{\bfA}\aR{\bfq} = \aR{\bfb}
  \end{split}\:,\tag{$P_{\calR}$}\label{eq:GenericOptimaR}
\end{equation}
which is a constrained convex problem in real augmented variables written in its most general form.

In the sequel, the equivalence between the three optimization problems \eqref{eq:GenericOptimH}, \eqref{eq:GenericOptimaH} and \eqref{eq:GenericOptimaR} is thoroughly exploited to construct a general constrained convex optimization framework directly in the quaternion domain.

\subsection{Lagrangian and duality}
As a first step, we exploit the equivalent real problem  \eqref{eq:GenericOptimaR} as a bridge to obtain the Lagrangian associated with the quaternion optimization problem \eqref{eq:GenericOptimH}.
The Lagrangian associated with the real equivalent problem \eqref{eq:GenericOptimaR} is the function $L: \calR_n \times \bbR^{m} \times \calR_p$ defined by
\begin{equation}
  \begin{split}
    &L(\aR{\bfq}, \boldsymbol{\nu}, \aR{\boldsymbol{\lambda}})\\
    &\triangleq  f_0(\aR{\bfq})  + \sum_{i = 1}^m \nu_i f_i(\aR{\bfq}) + \aR{\boldsymbol{\lambda}}^\top \left(\aR{\bfA}\aR{\bfq} - \aR{\bfb}\right)\:,\label{eq:LagrangianaR}
  \end{split}
\end{equation}
where $\boldsymbol{\nu} = (\nu_1, \nu_2, \ldots, \nu_m)^\transp \in \bbR^m$ is the dual variable associated to the $m$ inequality constraints and $\aR{\boldsymbol{\lambda}} \in \calR_p$ is the dual variable corresponding to the $4p$ real-augmented equality constraints $\aR{\bfA}\aR{\bfq} = \aR{\bfb}$.
To get the expression of the Lagrangian in terms of quaternion variables, we exploit the linear relation between the real and quaternion augmented quaternion representations $\calR$ and $\calH$ described in Sections \ref{sub:representationQuaternionVectors} together with the equivalence between affine constraints described in Section \ref{sub:affine}.
Using Proposition \ref{prop:conservationInnerProduct}, we get from \eqref{eq:LagrangianaR} the Lagrangian in augmented quaternion variables as
\begin{equation}
  \begin{split}
    &L(\aH{\bfq}, \boldsymbol{\nu}, \aH{\boldsymbol{\lambda}})\\
    &= f_0(\aH{\bfq})  + \sum_{i = 1}^m \nu_i f_i(\aH{\bfq}) + \frac{1}{4}\aH{\boldsymbol{\lambda}}^\herm \left(\aH{\bfA}\aH{\bfq} - \aH{\bfb}\right)\label{eq:LagrangianaH}\:.
  \end{split}
\end{equation}
Applying the same approach, one obtains the expression of the Lagrangian of the quaternion optimization problem \eqref{eq:GenericOptimH} as the function $L: \bbH^n \times \bbR^{m} \times \bbH^p$ defined by
\begin{equation}
  \begin{split}
    &L(\bfq, \boldsymbol{\nu}, \boldsymbol{\lambda}) \triangleq f_0(\bfq)  + \sum_{i = 1}^m \nu_i f_i(\bfq) \\&+ \real \left[\boldsymbol{\lambda}^\herm \left(\bfA_1\bfq + \bfA_2\invol{\bfq}{\bmi} + \bfA_3\invol{\bfq}{\bmj}+ \bfA_4\invol{\bfq}{\bmk} - \bfb\right)\right]\:.\label{eq:LagrangianH}
  \end{split}\
\end{equation}
\begin{remark}
  It can be easily checked by direct calculation that $L(\bfq, \boldsymbol{\nu}, \boldsymbol{\lambda}) = L(\aH{\bfq}, \boldsymbol{\nu}, \aH{\boldsymbol{\lambda}}) = L(\aR{\bfq}, \boldsymbol{\nu}, \aR{\boldsymbol{\lambda}})$, meaning that they represent the same quantity.
\end{remark}
\begin{remark}
  The Lagrange dual variable $\boldsymbol{\nu} \in \bbR^m$ associated to the inequality constraints is always a real-vector variable no matter which representation ($\bbH^n, \calR, \calH)$ is chosen, since the inequality constraints are defined by the real-valued functions $f_i$, $i=1, \ldots m$.
  The main difference between \eqref{eq:LagrangianaR}, \eqref{eq:LagrangianaH} and \eqref{eq:LagrangianH} lies in the way the affine equality constraints are handled, since $p$ quaternion equality constraints are equivalent to $4p$ real equality constraints.
  This explains why, in the expression of the quaternion Lagrangian \eqref{eq:LagrangianH}, the Lagrange dual variable $\boldsymbol{\lambda}$ associated to equality constraints is a $p$-dimensional quaternion vector.
\end{remark}

The definition of the quaternion Lagrangian \eqref{eq:LagrangianH} is a key step.
It allows to define quaternion-domain counterparts of fundamentals tools from Lagrangian duality in a  straightforward way.
For instance, let $\calD$ denote the domain of the problem \eqref{eq:GenericOptimH} such that $  \calD = \cap_{i=0}^m \dom f_i \cap \dom \calA$, where $\dom \calA$ denotes the domain of the widely affine constraint \eqref{eq:affineConstraintQuaternion}.
The quaternion dual function is defined as
\begin{equation}
  g(\boldsymbol{\nu}, \boldsymbol{\lambda}) \triangleq \inf_{\bfq \in \calD}  L(\bfq, \boldsymbol{\nu}, \boldsymbol{\lambda})\:.
\end{equation}
The dual quaternion Lagrange problem then reads
\begin{equation}
  \begin{split}
    \text{maximize } & g(\boldsymbol{\nu}, \boldsymbol{\lambda})\\
    \text{subject to } & \boldsymbol{\nu} \succeq 0
  \end{split}\label{eq:dualLagrangeOptimH}
\end{equation}
Just like with standard real-domain optimization, the dual Lagrange function yields lower bounds on the optimal value of the original problem \eqref{eq:GenericOptimH}, meaning that weak duality holds.
Conditions for strong duality are not given here.
They may be derived as well, by simple adaptation of the real case, see e.g. \cite[Section 5.2.3]{boyd_convex_2004}.

\subsection{Optimality conditions}
Exploiting the equivalence between the quaternion optimization problem \eqref{eq:GenericOptimH} and the real, augmented optimization problem \eqref{eq:GenericOptimaR} we derive two fundamental optimality conditions for \eqref{eq:GenericOptimH}.
To simplify the presentation, assume that the functions $f_i$, $i=0, 1, \ldots m$ are real-differentiable.

\textbf{Simple optimality condition.} Applying the usual optimality conditions for the equivalent real convex optimization problem (see \cite[Section 4.2.3]{boyd_convex_2004} for details) allows to derive a simple optimality condition for real-differentiable $f_0$.
Let $\calF$ denote the feasibility set
\begin{equation}
  \calF \triangleq \left\lbrace \bfq \left\vert \begin{array}{ll}
    f_i(\bfq) \leq 0, i=1, \ldots m \\
    \bfA_1\bfq + \bfA_2\invol{\bfq}{\bmi} + \bfA_3\invol{\bfq}{\bmj}+ \bfA_4\invol{\bfq}{\bmk} = \bfb
  \end{array}\right.\right\rbrace\:.
\end{equation}
Then by using the first order characterization of the convexity of $f_0$ given in Proposition \ref{prop:firstOrderConvexCharacterization}, one obtains the following necessary and sufficient condition: the vector $\tilde{\bfq}$ is optimal for the problem \eqref{eq:GenericOptimH} if and only if $\tilde{\bfq} \in \calF$ and
\begin{equation}
  \real \left(\conjgradQ f_0(\tilde{\bfq})^\herm (\bfr-\tilde{\bfq})\right) \geq 0 \text{ for all } \bfr \in \calF\:.
\end{equation}

\textbf{Karush-Kuhn-Tucker (KKT) conditions.}
Considering the convex quaternion optimization problem \eqref{eq:GenericOptimH}, sufficient optimality conditions known as KKT conditions can be derived from its real equivalent convex optimization problem.
\begin{proposition}[KKT conditions] Consider the constrained quaternion convex problem \eqref{eq:GenericOptimH} with quaternion Lagrangian $L(\bfq, \boldsymbol{\nu}, \boldsymbol{\lambda})$ given in \eqref{eq:LagrangianH}.
  Let $\tilde{\bfq} \in \bbH^n, \tilde{\boldsymbol{\nu}} \in \bbR^m,\tilde{\boldsymbol{\lambda}} \in \bbH^p$ be any points such that
  \begin{align}
     & f_i(\tilde{\bfq}) \leq 0   \qquad i = 1, \ldots, m \label{eq:KKT1}                                                                                      \\
     & \bfA_1\tilde{\bfq} + \bfA_2\invol{\tilde{\bfq}}{\bmi} + \bfA_3\invol{\tilde{\bfq}}{\bmj}+ \bfA_4\invol{\tilde{\bfq}}{\bmk} - \bfb = 0 \label{eq:KKT2}   \\
     & \tilde{\nu}_i \geq 0  \qquad  i = 1, \ldots, m                                                                                                          \\
     & \tilde{\nu}_i f_i(\tilde{\bfq}) =  0  \qquad  i = 1, \ldots, m                                                                                          \\
     & \conjgradQ L(\tilde{\bfq}, \tilde{\boldsymbol{\nu}}, \tilde{\boldsymbol{\lambda}}) = 0 \label{eq:KKTend}                                              &
  \end{align}
  then $\tilde{\bfq}$ and $(\tilde{\boldsymbol{\nu}}, \tilde{\boldsymbol{\lambda}})$ are primal and dual optimal, with zero-duality gap.
\end{proposition}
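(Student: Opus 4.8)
The plan is to reduce the claim to the classical sufficiency of the KKT conditions for convex programs over $\bbR^{4n}$, using the equivalence between the quaternion problem \eqref{eq:GenericOptimH} and its real-augmented counterpart \eqref{eq:GenericOptimaR} that underpins the whole framework. Concretely, I would first show that the five quaternion conditions \eqref{eq:KKT1}--\eqref{eq:KKTend}, evaluated at $\tilde{\bfq}$ with multipliers $(\tilde{\boldsymbol{\nu}}, \tilde{\boldsymbol{\lambda}})$, are \emph{one-for-one} equivalent to the standard real KKT conditions of \eqref{eq:GenericOptimaR} at $\aR{\tilde{\bfq}}$ with multipliers $(\tilde{\boldsymbol{\nu}}, \aR{\tilde{\boldsymbol{\lambda}}})$. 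Once this dictionary is in place, the conclusion is inherited from the real case.

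Most of the translation is immediate: the inequality feasibility \eqref{eq:KKT1}, the dual feasibility $\tilde{\nu}_i \geq 0$ and the complementary slackness $\tilde{\nu}_i f_i(\tilde{\bfq}) = 0$ transfer verbatim, since the $f_i$ are real-valued and $\boldsymbol{\nu}$ is a real vector in every representation; the widely affine equality \eqref{eq:KKT2} is equivalent to $\aR{\bfA}\aR{\bfq} = \aR{\bfb}$ by the construction of Section \ref{sub:affine}. The \emph{main obstacle} is the stationarity condition \eqref{eq:KKTend}: I must verify that the single $n$-dimensional quaternion equation $\conjgradQ L(\tilde{\bfq}, \tilde{\boldsymbol{\nu}}, \tilde{\boldsymbol{\lambda}}) = 0$ in fact encodes the full set of $4n$ real stationarity equations $\gradR L(\aR{\tilde{\bfq}}, \tilde{\boldsymbol{\nu}}, \aR{\tilde{\boldsymbol{\lambda}}}) = 0$. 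This is precisely where Proposition \ref{prop:stationaryPoints} is invoked: the map $\bfq \mapsto L(\bfq, \tilde{\boldsymbol{\nu}}, \tilde{\boldsymbol{\lambda}}) : \bbH^n \to \bbR$ is real-valued and real-differentiable (a nonnegative combination of the real-differentiable $f_i$ plus a real-valued widely affine term), so the proposition yields $\conjgradQ L = 0 \Leftrightarrow \gradR L = 0$, establishing that the compact quaternion condition is equivalent to full real stationarity.

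With the equivalence of all conditions secured, I would then apply the classical fact that, for a convex program, any KKT point is primal-dual optimal with zero duality gap \cite[Section 5.5.3]{boyd_convex_2004}. The argument runs as follows: because $\tilde{\boldsymbol{\nu}} \succeq 0$, the function $\bfq \mapsto L(\bfq, \tilde{\boldsymbol{\nu}}, \tilde{\boldsymbol{\lambda}})$ is convex, and stationarity together with the first-order characterization of Proposition \ref{prop:firstOrderConvexCharacterization} shows that $\tilde{\bfq}$ globally minimizes it, so $g(\tilde{\boldsymbol{\nu}}, \tilde{\boldsymbol{\lambda}}) = L(\tilde{\bfq}, \tilde{\boldsymbol{\nu}}, \tilde{\boldsymbol{\lambda}})$; feasibility and complementary slackness then collapse the Lagrangian to $f_0(\tilde{\bfq})$, closing the gap. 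Finally I would transport this back to \eqref{eq:GenericOptimH} using the equality of the three Lagrangians noted after \eqref{eq:LagrangianH} (hence equality of the dual functions) and the equivalence of optimal values from Section \ref{sub:convexProblems}, concluding that $\tilde{\bfq}$ is primal optimal for \eqref{eq:GenericOptimH} and $(\tilde{\boldsymbol{\nu}}, \tilde{\boldsymbol{\lambda}})$ is dual optimal with zero duality gap.
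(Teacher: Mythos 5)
Your proposal is correct and follows exactly the route the paper itself takes: the paper gives no explicit proof for this proposition, justifying it only by the remark that the KKT conditions ``can be derived from its real equivalent convex optimization problem,'' and your argument is precisely that derivation --- translating feasibility, dual feasibility and complementary slackness verbatim, using Proposition \ref{prop:stationaryPoints} to identify $\conjgradQ L = 0$ with $\gradR L = 0$, invoking classical real KKT sufficiency, and transporting the conclusion back via the equality $L(\bfq, \boldsymbol{\nu}, \boldsymbol{\lambda}) = L(\aR{\bfq}, \boldsymbol{\nu}, \aR{\boldsymbol{\lambda}})$. This is also the same mechanism the paper employs in its proof of Proposition \ref{prop:StationaryLagrangian}, so your write-up can be regarded as the detailed version of the paper's intended argument.
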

KKT conditions look almost the same as standard KKT conditions for real problems, except primal feasibility for equality constraints \eqref{eq:KKT2} and the Lagrangian stationarity condition \eqref{eq:KKTend}.
Proposition \ref{prop:StationaryLagrangian} below provides the explicit form of the stationary condition for the Lagrangian in quaternion variables.
\begin{proposition}\label{prop:StationaryLagrangian}
  Let $\tilde{\bfq} \in \bbH^n, \tilde{\boldsymbol{\nu}} \in \bbR^m,\tilde{\boldsymbol{\lambda}} \in \bbH^p$ such that they satisfy KKT conditions \eqref{eq:KKT1}--\eqref{eq:KKTend}.
  The stationarity condition \eqref{eq:KKTend} is explicitly given by
  \begin{equation}
    \begin{split}
      & \conjgradQ f_0(\tilde{\bfq}) + \sum_{i=1}^m \tilde{\nu}_i \conjgradQ f_i(\tilde{\bfq})\\ & + \frac{1}{4}\left[\bfA^\herm_1\tilde{\boldsymbol{\lambda}} + \invol{\left(\bfA^\herm_2\tilde{\boldsymbol{\lambda}}\right)}{\bmi} + \invol{\left(\bfA^\herm_3\tilde{\boldsymbol{\lambda}}\right)}{\bmj} + \invol{\left(\bfA^\herm_4\tilde{\boldsymbol{\lambda}}\right)}{\bmk}\right] = 0.\label{eq:StationaryLagrangianH}
    \end{split}
  \end{equation}
\end{proposition}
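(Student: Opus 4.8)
The plan is to exploit the linearity of the conjugated quaternion gradient $\conjgradQ$ and reduce the whole computation to a single scalar building block. Writing the quaternion Lagrangian \eqref{eq:LagrangianH} as $L = f_0 + \sum_{i=1}^m \nu_i f_i + h$, where $h(\bfq) = \real[\boldsymbol{\lambda}^\herm(\bfA_1\bfq + \bfA_2\invol{\bfq}{\bmi} + \bfA_3\invol{\bfq}{\bmj} + \bfA_4\invol{\bfq}{\bmk} - \bfb)]$ is the affine coupling term, linearity of $\conjgradQ$ (immediate from the definition \eqref{eq:GHRderivativeConjUnivariate}, together with the fact that the $\nu_i$ are real scalars) gives $\conjgradQ L = \conjgradQ f_0 + \sum_{i=1}^m \nu_i \conjgradQ f_i + \conjgradQ h$. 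The first two groups already match the statement, so all the work lies in computing $\conjgradQ h$; the constant $-\real(\boldsymbol{\lambda}^\herm\bfb)$ drops out since it is independent of $\bfq$.

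The central step is a scalar identity: for a fixed quaternion constant $c$ and $\mu \in \{1, \bmi, \bmj, \bmk\}$, I claim $\pdiff{}{\conj q}\real(c\invol{q}{\mu}) = \tfrac14\invol{(\conj c)}{\mu}$. I would establish it directly from the definition. Expanding $\invol{q}{\mu}$ into its real components via the canonical involution formulas and using $\real(cx) = c_a x_a - c_b x_b - c_c x_c - c_d x_d$ for $x = x_a + \bmi x_b + \bmj x_c + \bmk x_d$, the four real partials $\partial/\partial q_a,\ldots,\partial/\partial q_d$ are read off at once; substituting them into \eqref{eq:GHRderivativeConjUnivariate} with $\mu = 1$ and regrouping reproduces $\tfrac14\invol{(\conj c)}{\mu}$, where one uses that $\invol{\cdot}{\mu}$ commutes with conjugation for $\mu \in \{1, \bmi, \bmj, \bmk\}$.

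I would then vectorize. For the first block, expanding $\boldsymbol{\lambda}^\herm\bfA_1\bfq = \sum_{l,j}\conj{\lambda_l}(A_1)_{lj}q_j$ and applying the building block with $c = \conj{\lambda_l}(A_1)_{lj}$, whose conjugate is $\conj{(A_1)_{lj}}\lambda_l$, yields $\nabla_{\conj{q_j}}\real(\boldsymbol{\lambda}^\herm\bfA_1\bfq) = \tfrac14\sum_l\conj{(A_1)_{lj}}\lambda_l = \tfrac14(\bfA_1^\herm\boldsymbol{\lambda})_j$, i.e. $\tfrac14\bfA_1^\herm\boldsymbol{\lambda}$. For the involution blocks, the entry-wise action of $\invol{\cdot}{\mu}$ lets me pull the (real-linear) involution through the sum over $l$, giving $\tfrac14\invol{(\bfA_k^\herm\boldsymbol{\lambda})}{\mu}$ with $\mu = \bmi, \bmj, \bmk$ for $k = 2, 3, 4$. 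Summing the four contributions and evaluating at $\tilde{\bfq}, \tilde{\boldsymbol{\lambda}}$ produces exactly the bracketed term of \eqref{eq:StationaryLagrangianH}.

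The main obstacle is the bookkeeping forced by quaternion noncommutativity in the building block: the constant multiplying $q_j$ is $\conj{\lambda_l}(A_k)_{lj}$, so its conjugate is $\conj{(A_k)_{lj}}\lambda_l$ with the order reversed, and it is essential that this is what surfaces, yielding $\bfA_k^\herm\boldsymbol{\lambda}$ rather than any other arrangement; one must equally verify that the factor $\tfrac14$ and the correct involution $\invol{\cdot}{\mu}$ emerge together from \eqref{eq:GHRderivativeConjUnivariate}. As an independent cross-check, the same result follows from the augmented-real route: since $h(\bfq) = \aR{\boldsymbol{\lambda}}^\transp\aR{\bfA}\aR{\bfq} - \text{const}$ one has $\gradR h = \aR{\bfA}^\transp\aR{\boldsymbol{\lambda}}$, and the first $n$-block of $\conjgradH h = \tfrac14\bfJ_n\gradR h$, extracted via \eqref{eq:gradHconjtogradR}, must reproduce the same expression.
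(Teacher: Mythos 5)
Your proof is correct, but it follows a genuinely different route from the paper's. The paper never differentiates the quaternion Lagrangian directly in $\bbH^n$: it invokes Proposition~\ref{prop:stationaryPoints} to transfer the stationarity condition to the augmented real domain, writes the classical real condition $\gradR f_0(\aR{\tilde{\bfq}}) + \sum_{i}\tilde{\nu}_i\gradR f_i(\aR{\tilde{\bfq}}) + \aR{\bfA}^\transp\aR{\tilde{\boldsymbol{\lambda}}} = 0$ there, maps it to $\calH$ via $\conjgradH = \frac{1}{4}\bfJ_n\gradR$ to obtain $\conjgradH f_0 + \sum_i\tilde{\nu}_i\conjgradH f_i + \frac{1}{4}\aH{\bfA}^\herm\aH{\tilde{\boldsymbol{\lambda}}} = 0$, and finally keeps the first $n$ rows, using the band structure \eqref{eq:structuredBandMatrixA} of $\aH{\bfA}$ to identify the first block of $\aH{\bfA}^\herm\aH{\tilde{\boldsymbol{\lambda}}}$ with the bracketed term. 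In other words, the paper's argument is essentially your closing \emph{cross-check}, carried out with the machinery of Sections~\ref{sub:representationQuaternionVectors} and~\ref{sub:affine}. Your main argument instead computes $\conjgradQ L$ entirely within $\bbH^n$: real-linearity of the GHR gradient plus the scalar identity $\pdiff{}{\conj{q}}\real\left(c\,\invol{q}{\mu}\right) = \frac{1}{4}\invol{(\conj{c})}{\mu}$ for $\mu\in\lbrace 1,\bmi,\bmj,\bmk\rbrace$, which is indeed correct, as is your handling of the two delicate points: the noncommutativity bookkeeping (with $c = \conj{\lambda_l}(A_k)_{lj}$ one gets $\conj{c} = \conj{(A_k)_{lj}}\lambda_l$, which is exactly what sums to $(\bfA_k^\herm\boldsymbol{\lambda})_j$), and the additivity of involutions that lets you pull $\invol{\cdot}{\mu}$ through the sum over $l$. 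What your route buys: it is self-contained, requires neither Proposition~\ref{prop:stationaryPoints} nor the structured matrix $\aH{\bfA}$, and exhibits the bracketed term as the literal value of $\conjgradQ$ applied to the coupling term, not merely as an equivalent vanishing condition. What the paper's route buys: it reuses already-established results, confines all representation bookkeeping to the matrix $\aH{\bfA}$, and makes explicit that the quaternion stationarity condition coincides with the classical real KKT stationarity condition of the equivalent problem \eqref{eq:GenericOptimaR} --- which is precisely what legitimizes calling \eqref{eq:KKT1}--\eqref{eq:KKTend} KKT conditions and inheriting their sufficiency from the real case.
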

\begin{proof}
  Using Proposition \ref{prop:stationaryPoints}, the stationarity condition can be equivalently expressed as
  \begin{equation}
    \begin{split}
      \conjgradQ L(\tilde{\bfq}, \tilde{\boldsymbol{\nu}}, \tilde{\boldsymbol{\lambda}}) = 0 \Leftrightarrow \gradR  L(\aR{\tilde{\bfq}}, \tilde{\boldsymbol{\nu}}, \aR{\tilde{\boldsymbol{\lambda}}}) = 0 \\ \Leftrightarrow \conjgradH  L(\aH{\tilde{\bfq}}, \tilde{\boldsymbol{\nu}}, \aH{\tilde{\boldsymbol{\lambda}}}) = 0\:,
    \end{split}
  \end{equation}
  where $L(\aR{\tilde{\bfq}}, \tilde{\boldsymbol{\nu}}, \aR{\tilde{\boldsymbol{\lambda}}})$ and $L(\aH{\tilde{\bfq}}, \tilde{\boldsymbol{\nu}}, \aH{\tilde{\boldsymbol{\lambda}}})$ are given by \eqref{eq:LagrangianaR} and \eqref{eq:LagrangianaH}, respectively.
  A straightforward calculation gives explicitly the stationarity condition in the $\calR$ representation
  \begin{equation}
    \begin{split}
      &\gradR  L(\aR{\tilde{\bfq}}, \tilde{\boldsymbol{\nu}}, \aR{\tilde{\boldsymbol{\lambda}}}) =  0 \\
      &\Leftrightarrow \gradR f_0(\aR{\tilde{\bfq}}) + \sum_{i=1}^m \tilde{\nu}_i \gradR f_i(\aR{\tilde{\bfq}}) + \aR{\bfA}^\transp \aR{\tilde{\boldsymbol{\lambda}}} = 0\:.
    \end{split}
  \end{equation}
  Turning to the $\calH$-domain condition, we exploit the linear relationship between vectors of $\calR$ and $\calH$ as well as the relation \eqref{eq:gradRtogradHconj} between $\calR$ and $\calH$ gradients $\conjgradH f = \frac{1}{4}\bfJ_n\gradR f $.
  After simplification, one obtains the stationarity condition in $\calH$:
  \begin{equation}
    \begin{split}
      &\conjgradH  L(\aH{\tilde{\bfq}}, \tilde{\boldsymbol{\nu}}, \aH{\tilde{\boldsymbol{\lambda}}}) = 0\\
      &\Leftrightarrow \conjgradH f_0(\aH{\tilde{\bfq}}) + \sum_{i=1}^m \tilde{\nu}_i \conjgradH f_i(\aH{\tilde{\bfq}}) + \frac{1}{4}\aH{\bfA}^\herm\aH{\tilde{\boldsymbol{\lambda}}} = 0\:.\label{eq:proofStationaryLagrangianaH}
    \end{split}
  \end{equation}
  To obtain the desired stationarity condition \eqref{eq:StationaryLagrangianH} in $\bbH^n$, we keep the $n$-first rows of \eqref{eq:proofStationaryLagrangianaH} and compute explicitly the $n$-first blocks of the quaternion matrix product $\aH{\bfA}^\herm\aH{\tilde{\boldsymbol{\lambda}}}$.
\end{proof}

\section{Quaternion Alternating Direction Method of Multipliers}
\label{sec:quaternionADMM}

The quaternion-domain optimization framework introduced in previous sections permits an efficient and natural derivation of quaternion-domain algorithms from their existing real-domain counterparts.
The methodology is as follows: given a quaternion-domain optimization problem in variable $\bfq \in \bbH^n$, we find its real augmented domain equivalent in variable $\aR{\bfq} \in \bbR^{4n}$.
Then, one can pick any real-domain algorithm to solve the real-augmented problem; once the iterates for $\aR{\bfq}$ are found, they are converted into quaternion augmented domain $\calH$.
Finally the quaternion-domain algorithm is obtained by considering only the first
$n$ entries of $\aH{\bfq}$.
This strategy is very general and can be virtually applied to any real-domain algorithm.
Importantly, it also ensures that the convergence properties of the quaternion-domain algorithms are directly inherited from their augmented real counterparts.

To illustrate the proposed methodology, we derive in the sequel the quaternion version of the popular alternating direction method of multipliers (ADMM), which we simply call quaternion ADMM (Q-ADMM).
This focus is motivated by the fact that its real-domain counterpart \cite{boyd_distributed_2010} can accomodate a large variety of constraints together while maintaining simple and efficient updates.
As such,  Q-ADMM appears as a versatile algorithm for quaternion-domain optimization.
Note that there have been several attempts to formulate ADMM for quaternion-domain optimization problems: they either rely on a real augmented formulation \cite{zou_quaternion_2016,zou_quaternion_2021} or are particularly designed for specific applications \cite{miao_quaternion-based_2020,miao_low-rank_2020}.
In contrast, this paper introduces a general Q-ADMM framework by leveraging the proposed quaternion convex optimization framework.

Now, consider the general quaternion optimization problem:
\begin{equation}
  \begin{split}
    \text{minimize } & f(\bfq) + g(\bfp)\\
    \text{subject to } & \bfA_1 \bfq + \bfA_2 \invol{\bfq}{\bmi} + \bfA_3\invol{\bfq}{\bmj} + \bfA_4 \invol{\bfq}{\bmk} \\
    &+ \bfB_1 \bfp + \bfB_2 \invol{\bfp}{\bmi} + \bfB_3\invol{\bfp}{\bmj} + \bfB_4 \invol{\bfp}{\bmk} = \bfc\:,
  \end{split}\label{eq:admmQuaternionGeneric}
\end{equation}
where $f$ and $g$ are real-valued convex functions of quaternion variables $\bfq \in \bbH^n$ and $\bfp \in \bbH^m$, respectively.
The two variables are linked through a widely affine constraint, defined by $\bfA_i \in \bbH^{p\times n}, \bfB_i \in \bbH^{p\times m}$ for $i=1, 2, 3, 4$ and $\bfc \in \bbH^p$.
Note that since $f$ and $g$ are supposed convex, problem \eqref{eq:admmQuaternionGeneric} is a widely affine equality constrained convex quaternion optimization problem.
Once again, widely affine relations in $\bfq$ and $\bfp$ variables are necessary to ensure that \eqref{eq:admmQuaternionGeneric} encodes the generality of convex quaternion equality constraints.

Quaternion ADMM aims at solving \eqref{eq:admmQuaternionGeneric} in its quaternion variables $\bfq$ and $\bfp$.
To develop this algorithm described in Section \ref{sub:quaternionADMM} below, we start by deriving the standard ADMM updates for the real-augmented problem associated with \eqref{eq:admmQuaternionGeneric}.
Then, by considering equivalencies, one obtains the corresponding algorithms in the quaternion augmented representation $\calH$ and eventually for $\bbH^n$.
For completeness, note that a special instance of Q-ADMM has been proposed recently in \cite{qadmm2021}, where $\bfA_i = \bfB_i = \mathbf{0}$ for $i=1,2,3$ in \eqref{eq:admmQuaternionGeneric}.

\subsection{ADMM in real augmented domain}
The original real-domain ADMM algorithm \cite{boyd_distributed_2010} can be directly applied to the real-augmented optimization problem equivalent to \eqref{eq:admmQuaternionGeneric}, which reads
\begin{equation}
  \begin{split}
    \text{minimize } & f(\aR{\bfq}) + g(\aR{\bfp})\\
    \text{subject to } & \aR{\bfA}\aR{\bfq} + \aR{\bfB}\aR{\bfp} = \aR{\bfc}
  \end{split}\label{eq:admmRealAugmentedGeneric}
\end{equation}
where $\aR{\bfq} \in \calR_n$ and $\aR{\bfp} \in \calR_m$ are augmented real variables,  and where $\aR{\bfA} \in \bbR^{4p\times 4n}, \aR{\bfB} \in \bbR^{4p\times 4m}$ and $\aR{\bfc} \in \bbR^{4p}$ encode a general affine relation between augmented real variables $\aR{\bfq}$ and $\aR{\bfp}$.
First, define the augmented Lagrangian for $\rho \geq 0$ and Lagrange multiplier $\aR{\boldsymbol{\lambda}} \in \calR_p$:
\begin{equation}
  \begin{split}
    &L_\rho(\aR{\bfq}, \aR{\bfp}, \aR{\boldsymbol{\lambda}})\\
    &\triangleq f(\aR{\bfq}) + g(\aR{\bfp})+ \aR{\boldsymbol{\lambda}}^\transp \left( \aR{\bfA}\aR{\bfq}     + \aR{\bfB}\aR{\bfp} - \aR{\bfc}\right)  \\
    &+  \frac{\rho}{2} \left\Vert \aR{\bfA}\aR{\bfq} + \aR{\bfB}\aR{\bfp} - \aR{\bfc}\right\Vert_{2}^2\:.
  \end{split}
\end{equation}
ADMM updates then consist of the iterations
\begin{align}
  \aR{\bfq}^{(k+1)}                 & = \argmin_{\aR{\bfq}} L_\rho(\aR{\bfq}, \aR{\bfp}^{(k)}, \aR{\boldsymbol{\lambda}}^{(k)})                                    \\
  \aR{\bfp}^{(k+1)}                 & = \argmin_{\aR{\bfp}} L_\rho(\aR{\bfq}^{(k+1)}, \aR{\bfp}, \aR{\boldsymbol{\lambda}}^{(k)})                                  \\
  \aR{\boldsymbol{\lambda}}^{(k+1)} & = \aR{\boldsymbol{\lambda}}^{(k)} + \rho\left(\aR{\bfA}\aR{\bfq}^{(k+1)}     + \aR{\bfB}\aR{\bfp}^{(k+1)} - \aR{\bfc}\right)
\end{align}
Defining the \emph{scaled dual variable} as $\aR{\bfu} = (1/\rho) \aR{\boldsymbol{\lambda}}$ one obtains the \emph{scaled form} for ADMM
\begin{align}
  \aR{\bfq}^{(k+1)} & = \argmin_{\aR{\bfq}}\left\lbrace f(\aR{\bfq}) \phantom{\frac{\rho}{2}}\right. \nonumber                                                          \\
                    & \left. + \frac{\rho}{2}\left\Vert \aR{\bfA}\aR{\bfq}     + \aR{\bfB}\aR{\bfp}^{(k)} - \aR{\bfc} + \aR{\bfu}^{(k)}\right\Vert_{2}^2\right\rbrace   \\
  \aR{\bfp}^{(k+1)} & = \argmin_{\aR{\bfp}}\left\lbrace g(\aR{\bfp}) \phantom{\frac{\rho}{2}}\right. \nonumber                                                          \\
                    & \left. + \frac{\rho}{2}\left\Vert \aR{\bfA}\aR{\bfq}^{(k+1)}     + \aR{\bfB}\aR{\bfp} - \aR{\bfc} + \aR{\bfu}^{(k)}\right\Vert_{2}^2\right\rbrace \\
  \aR{\bfu}^{(k+1)} & = \aR{\bfu}^{(k)} + \aR{\bfA}\aR{\bfq}^{(k+1)}     + \aR{\bfB}\aR{\bfp}^{(k+1)} - \aR{\bfc}\:.
\end{align}
Equipped with these expressions, the goal is now to find augmented quaternion domain and quaternion domain counterparts for the expressions above.

\subsection{ADMM in quaternion augmented domain}
The linear relationship between $\calR$ and $\calH$ permits a simple derivation of ADMM in quaternion augmented variables.
First, note that by Proposition \ref{prop:conservationInnerProduct} one has
\begin{equation}
  \left\Vert \aR{\bfA}\aR{\bfq} + \aR{\bfB}\aR{\bfp} - \aR{\bfc}\right\Vert_{2}^2 = \left\Vert \aH{\bfA}\aH{\bfq} + \aH{\bfB}\aH{\bfp} - \aH{\bfc}\right\Vert_{2}^2
\end{equation}
so that together with the expression of the Lagrangian in augmented quaternion variables \eqref{eq:LagrangianaH} the augmented Lagrangian in augmented quaternion variables reads:
\begin{equation}
  \begin{split}
    L_\rho(\aH{\bfq}, \aH{\bfp}, \aH{\boldsymbol{\lambda}})  &= f(\aH{\bfq}) + g(\aH{\bfp})\\
    & + \frac{1}{4}\aH{\boldsymbol{\lambda}}^\herm \left( \aH{\bfA}\aH{\bfq}     + \aH{\bfB}\aH{\bfp} - \aH{\bfc}\right)  \\
    &+  \frac{\rho}{2} \left\Vert \aH{\bfA}\aH{\bfq} + \aH{\bfB}\aH{\bfp} - \aH{\bfc}\right\Vert_{2}^2\:.
  \end{split}
\end{equation}
Then, since optimal points are the same regardless the representation (see Proposition \ref{prop:stationaryPoints}) the first two iterates of ADMM are identical.
The dual ascent step is obtained once again by exploiting the linear relationship between $\calR$ and $\calH$.
The augmented Q-ADMM iterates thus read
\begin{align}
  \aH{\bfq}^{(k+1)}                 & = \argmin_{\aH{\bfq}} L_\rho(\aH{\bfq}, \aH{\bfp}^{(k)}, \aH{\boldsymbol{\lambda}}^{(k)})                                   \\
  \aH{\bfp}^{(k+1)}                 & = \argmin_{\aH{\bfp}} L_\rho(\aH{\bfq}^{(k+1)}, \aH{\bfp}, \aH{\boldsymbol{\lambda}}^{(k)})                                 \\
  \aH{\boldsymbol{\lambda}}^{(k+1)} & = \aH{\boldsymbol{\lambda}}^{(k)} + \rho\left(\aH{\bfA}\aH{\bfq}^{(k+1)} + \aH{\bfB}\aH{\bfp}^{(k+1)} - \aH{\bfc}\right)\:.
\end{align}
Similar arguments and computations give the scaled form iterates, which are omitted for space considerations.

\subsection{ADMM in quaternion domain}
\label{sub:quaternionADMM}
ADMM iterates in quaternion domain can now be obtained directly from expressions above.
For the sake of notation brevity, let us introduce the quaternion residual $\bfr(\bfq, \bfp)$ as
\begin{equation}
  \begin{split}
    \bfr(\bfp, \bfq) &=    \bfA_1 \bfq + \bfA_2 \invol{\bfq}{\bmi} + \bfA_3\invol{\bfq}{\bmj} + \bfA_4 \invol{\bfq}{\bmk} \\
    &+ \bfB_1 \bfp + \bfB_2 \invol{\bfp}{\bmi} + \bfB_3\invol{\bfp}{\bmj} + \bfB_4 \invol{\bfp}{\bmk} - \bfc \label{eq:quaternionResidualDefinition}
  \end{split}
\end{equation}
and recall that by Proposition \ref{prop:conservationInnerProduct} one has $\Vert  \bfr(\bfp, \bfq) \Vert_{2}^2 = \Vert  \aH{\bfr}(\aH{\bfp}, \aH{\bfq}) \Vert^2_{2}$.
The augmented Lagrangian in quaternion variables then reads
\begin{equation}
  \begin{split}
    L_\rho(\bfq, \bfp, \boldsymbol{\lambda}) & = f(\bfq) + g(\bfp)\\ &+\real\left(\boldsymbol{\lambda}^\herm \bfr(\bfp, \bfq) \right) +  \frac{\rho}{2} \left\Vert \bfr(\bfp, \bfq)\right\Vert_{2}^2
  \end{split}
\end{equation}
The ADMM iterates in the quaternion domain are very similar to their corresponding real augmented counterparts
\begin{align}
  \bfq^{(k+1)}                 & = \argmin_{\bfq} L_\rho(\bfq, \bfp^{(k)}, \boldsymbol{\lambda}^{(k)})   \\
  \bfp^{(k+1)}                 & = \argmin_{\bfp} L_\rho(\bfq^{(k+1)}, \bfp, \boldsymbol{\lambda}^{(k)}) \\
  \boldsymbol{\lambda}^{(k+1)} & = \boldsymbol{\lambda}^{(k)} +\rho \bfr(\bfq^{(k+1)}, \bfp^{(k+1)})
\end{align}
and can be expressed in scaled form as
\begin{align}
  \bfq^{(k+1)} & = \argmin_{\bfq}\left\lbrace f(\bfq) + \frac{\rho}{2}\left\Vert \bfr(\bfq, \bfp^{(k)}) +  \bfu^{(k)}\right\Vert_{2}^2\right\rbrace \label{eq:dualScaledQuaternionADMMq} \\
  \bfp^{(k+1)} & = \argmin_{\bfp}\left\lbrace g(\bfp) + \frac{\rho}{2}\left\Vert \bfr(\bfq^{(k+1)}, \bfp) +  \bfu^{(k)}\right\Vert_{2}^2\right\rbrace                                    \\
  \bfu^{(k+1)} & = \bfu^{(k)} + \bfr(\bfq^{(k+1)}, \bfp^{(k+1)})\:.\label{eq:dualScaledQuaternionADMMqend}
\end{align}

\subsection{Convergence of Q-ADMM}
Q-ADMM inherits its convergence properties from the convergence results of the associated augmented real ADMM algorithm \cite{boyd_distributed_2010}, which are adapted to the quaternion case below for completeness.
More precisely, let us make the standard assumptions that the extended-real-valued functions $f:\bbH^n \rightarrow \bbR$ and $g: \bbH^m \rightarrow \bbR$ are closed\footnote{Alike in standard real optimization \cite[A.3.3]{boyd_convex_2004}, a function $h:\bbH^n \rightarrow \bbR$ is said to be closed if, for each $\alpha \in \bbR$, the sublevel set $\lbrace \bfq \in \dom h \vert h(\bfq)\leq \alpha\rbrace$ is closed. Recall that a set $\calS$ is closed if its complement $\bbH^n \setminus \calS$ is open. A quaternion set $\calS \subseteq \bbH^n$ is open iff $\forall \bfq \in \calS$, $\exists \epsilon > 0$ such that $\lbrace \bfp \mid \Vert \bfq - \bfp \Vert_2 \leq \epsilon \rbrace\subseteq \calS$. }, proper and convex.
We also assume that there exist at least one $(\tilde{\bfq}, \tilde{\bfp}, \tilde{\boldsymbol{\lambda}})$ such that $L_0(\tilde{\bfq}, \tilde{\bfp}, {\boldsymbol{\lambda}}) \leq L_0(\tilde{\bfq}, \tilde{\bfp}, \tilde{\boldsymbol{\lambda}}) \leq L_0(\bfq, \bfp, \tilde{\boldsymbol{\lambda}})$ for all $\bfq, \bfp, \boldsymbol{\lambda}$.
Under these two assumptions, the Q-ADMM iterates satisfy:
\begin{itemize}
  \item Convergence of the quaternion residual \eqref{eq:quaternionResidualDefinition}: $\bfr(\bfq^{(k)}, \bfp^{(k)}) \rightarrow 0$ as $k\rightarrow \infty$, ;
  \item Objective convergence: $f(\bfq^{(k)}) + g(\bfp^{(k)}) \rightarrow \tilde{v}$, where $\tilde{v}$ is the optimal value of \eqref{eq:admmQuaternionGeneric};
  \item Dual variable convergence: $\boldsymbol{\lambda}^{(k)} \rightarrow \tilde{\boldsymbol{\lambda}}$ as $k\rightarrow \infty$\:.
\end{itemize}
Necessary and sufficient conditions for primal-dual optimality of the triplet $(\tilde{\bfq}, \tilde{\bfp}, \tilde{\boldsymbol{\lambda}})$ can be directly obtained  for the Q-ADMM problem \eqref{eq:admmQuaternionGeneric} using the KKT conditions \eqref{eq:KKT1}--\eqref{eq:KKTend}; they are omitted here for space considerations.

\subsection{Special case: proximal operator form}
\label{sub:specialCaseAdmm}
Consider the special case where the affine constraint in \eqref{eq:admmQuaternionGeneric} is simply given by $\bfq -\bfp = 0$, that is, $m=n$, $\bfA_1 = \bfB_1 = \bfI_n$ and $\bfA_i=\bfB_i= \mathbf{0}_n$ for $i=2, 3, 4$.
Q-ADMM iterations then become
\begin{align}
  \bfq^{(k+1)} & = \argmin_{\bfq}\left\lbrace f(\bfq) + \frac{\rho}{2}\left\Vert \bfq  -\bfp^{(k)} + \bfu^{(k)} \right\Vert_{2}^2\right\rbrace\label{eq:quaternionADMMqProx} \\
  \bfp^{(k+1)} & = \argmin_{\bfp}\left\lbrace g(\bfp) + \frac{\rho}{2}\left\Vert \bfq^{(k+1)}  -\bfp + \bfu^{(k)} \right\Vert_{2}^2\right\rbrace                             \\
  \bfu^{(k+1)} & = \bfu^{(k)} + \bfq^{(k+1)}-\bfp^{(k+1)}\:.
\end{align}
Focusing on the $\bfq$-update for simplicity, \eqref{eq:quaternionADMMqProx} can be rewritten as
\begin{equation}
  \bfq^{(k+1)} \triangleq \prox_{f/\rho} (\bfp^{(k)} - \bfu^{(k)}) \label{eq:defQuaternionProx}
\end{equation}
where $\prox_{f/ \rho}$ denotes the \emph{quaternion proximal} operator of $f$ with penalty $\rho$, first introduced in \cite{chan_complex_2016}.
Importantly, when $f$ is the indicator function on a closed convex set $\calC \subset \bbH^n$, the $\bfq$-update becomes
\begin{equation}
  \bfq^{(k+1)} = \argmin_{\bfq \in \calC} \left\Vert  \bfq  -\bfp^{(k)} + \bfu^{(k)} \right\Vert_{2}^2 \triangleq \Pi_{\calC}\left(\bfp^{(k)} - \bfu^{(k)}\right)
\end{equation}
where $\Pi_{\calC}$ denotes the projection onto $\calC$ in the quaternion Euclidean norm.

\section{The framework in practice}
\label{sec:examples}
This last section illustrates the relevance of the proposed framework by considering two general examples of constrained convex optimization problems in quaternion variables.
Both problems can be solved efficiently using the Q-ADMM algorithm introduced in Section \ref{sec:quaternionADMM}.
Since Q-ADMM share the same convergence and numerical properties with its real-augmented domain counterpart, we only focus hereafter on the many insights enabled by the quaternion framework.
Note that all quaternion-domain algorithms presented in this section can safely be implemented in \textsc{Matlab}\textregistered\:using \texttt{QTFM}\footnote{\emph{Quaternion toolbox for Matlab}, available online at \url{https://sourceforge.net/projects/qtfm/.}}.

\subsection{Constrained widely linear least squares}
\label{sub:constrainedWLLS}
Quaternion widely linear models have attracted a lot of interest in recent years, see for instance \cite{took2010quaternion,jahanchahi_class_2013,jahanchahi_class_2014,navarro-moreno_quaternion_2014}, among others.
Importantly, resulting (unconstrained) widely linear quaternion estimators yield optimal mean square error estimation when considering improper (\ie non-circular) 3D or 4D data.
A natural generalization of this approach is to require the solution to obey to some convex constraints, leading to  the following \emph{constrained widely linear least squares problem}
\begin{equation}
  \begin{split}
    \text{minimize } & \frac{1}{2}\Vert \bfy - \bfP_1 \bfq  - \bfP_2 \invol{\bfq}{\bmi} -\bfP_3\invol{\bfq}{\bmj} - \bfP_4 \invol{\bfq}{\bmk}\Vert_2^2\\
    \text{subject to } & \bfq \in \calC\:,
  \end{split}\label{prob:constrainedWLLS}
\end{equation}
where $\calC$ is a closed convex subset of $\bbH^n$.
An important special case is when $\calC$ is a convex cone, in particular to encode \emph{quaternion non-negative} constraints.
For instance, in \cite{mizoguchi_hypercomplex_2019} the authors enforce each component of the vector to have non-negative real and imaginary parts; in \cite{flamant_quaternion_2020}, each component of the vector must obey to $q_a \geq 0$ and $q_a^2 \geq q_b^2 + q_c^2 + q_d^2$, which can be interpreted as the non-negative definiteness of a specific 2-by-2 complex matrix.

The general constrained widely linear least square problem \eqref{prob:constrainedWLLS} can be solved efficiently using Q-ADMM.
Following a standard procedure \cite{boyd_distributed_2010}, the problem \eqref{prob:constrainedWLLS} can be cast as
\begin{align}
  \text{minimize }   & \frac{1}{2}\Vert \bfy - \bfP_1 \bfq  - \bfP_2 \invol{\bfq}{\bmi} -\bfP_3\invol{\bfq}{\bmj} - \bfP_4 \invol{\bfq}{\bmk}\Vert_2^2 + \iota_{\calC}(\bfp)\nonumber \\
  \text{subject to } & \bfq -\bfp = 0
\end{align}
where $\iota_{\calC}$ denotes the indicator function of $\calC$ such that $\iota_{\calC}(\bfp) = 0$ for $\bfp \in \calC$ and $\iota(\bfp) = +\infty$ otherwise.
Q-ADMM iterations can be directly applied:
\begin{align}
  \bfq^{(k+1)} & =  \argmin_{\bfq} \left\lbrace \Vert \bfy - \bfP_1 \bfq  - \bfP_2 \invol{\bfq}{\bmi} -\bfP_3\invol{\bfq}{\bmj} - \bfP_4 \invol{\bfq}{\bmk}\Vert_2^2 \right. \nonumber \\
               & \left. + \rho\Vert \bfq - \bfp^{(k)} + \bfu^{(k)}\Vert_2^2\right\rbrace \label{eq:subproblemQWLLS}                                                                    \\
  \bfp^{(k+1)} & =  \Pi_{\calC}\left(\bfq^{(k+1)} + \bfu^{(k)}\right)                                                                                                                  \\
  \bfu^{(k+1)} & = \bfu^{(k)} + \bfq^{(k+1)}-\bfp^{(k+1)}\:.
\end{align}
In a nutshell, Q-ADMM consists in iteratively solving a general unconstrained quaternion least squares problem followed by projection onto the constraint set $\calC$ and dual ascent.
Such formulation is thus particulary useful when projection onto $\calC$ can be carried out explicitly, as in the case of non-negative constraints mentioned above \cite{flamant_quaternion_2020,mizoguchi_hypercomplex_2019}.

\noindent\textbf{Solving the $\bfq$-variable subproblem \eqref{eq:subproblemQWLLS}}
Since the function to be minimized involves $\bfq$ and its three canonical involutions $\invol{\bfq}{\bmi}, \invol{\bfq}{\bmj}, \invol{\bfq}{\bmk}$, finding a solution is not as straightforward as in the linear case $(\bfP_2 = \bfP_3= \bfP_4 = \mathbf{0}$).
Two strategies are essentially possible: \emph{(i)} obtain an explicit expression for $\bfq^{(k+1)}$ using the augmented real $\calR$ or the augmented quaternion representation $\calH$; \emph{(ii)} solve iteratively for \eqref{eq:subproblemQWLLS} \emph{e.g.} using quaternion gradient descent in $\bbH^n$.
We describe below these two approaches in detail.

\paragraph{Explicit solution via augmented representations} The $\bfq$-update \eqref{eq:subproblemQWLLS} can be rewritten in $\calR$ as the $\aR{\bfq}$-update
\begin{equation}
  \aR{\bfq}^{(k+1)} = \argmin_{\aR{\bfq}} \left\lbrace \left\Vert \aR{\bfP}\aR{\bfq} - \aR{\bfy}\right\Vert_{2}^2 + \rho \left\Vert \aR{\bfq} - \aR{\bfv}\right\Vert_{2}^2 \right\rbrace
\end{equation}
where $\aR{\bfP} \in \bbR^{4m\times 4n}, \aR{\bfy} \in \bbR^{4m}$ and $\aR{\bfv} = \aR{\bfp}^{(k)} - \aR{\bfu}^{(k)} \in \bbH^{4n}$ is constant.
Being a standard real optimization problem, the optimality condition reads:
\begin{equation}
  \aR{\bfP}^\transp\left(\aR{\bfP}\aR{\bfq} - \aR{\bfy}\right) + \rho\left(\aR{\bfq} - \aR{\bfv}\right) = 0
\end{equation}
so that one gets easily the standard explicit solution
\begin{equation}
  \aR{\bfq}^{(k+1)} = \left(\aR{\bfP}^\transp\aR{\bfP}+ \rho \bfI_{4n}\right)^{-1}\left(\aR{\bfP}^\transp \aR{\bfy} + \rho\aR{\bfv}\right)\:.
\end{equation}
Exploiting the relation between $\calR$ and $\calH$ augmented representations, we also get
\begin{equation}
  \aH{\bfq}^{(k+1)} = \left(\aH{\bfP}^\herm\aH{\bfP}+ \rho\bfI_{4n}\right)^{-1}\left(\aH{\bfP}^\herm \aH{\bfy} + \rho\aH{\bfv}\right)
\end{equation}
so that the explicit $\bfq$-update reads
\begin{equation}
  \bfq^{(k+1)} = \bfS\left(\aH{\bfP}^\herm\aH{\bfP}+ \rho\bfI_{4n}\right)^{-1}\left(\aH{\bfP}^\herm \aH{\bfy} + \rho\aH{\bfv}\right)\label{eq:WLSSexplicitq}
\end{equation}
where $\bfS = \begin{bmatrix}
    \bfI_n & \mathbf{0}_{n} & \mathbf{0}_{n} & \mathbf{0}_{n}
  \end{bmatrix}\in \bbR^{n\times 4n}$ selects the first $n$ entries of $\aH{\bfq}^{(k+1)}$.
Notably, when the cost function is linear quadratic, \emph{i.e.} when $\bfP_i = 0$ for $i=2, 3, 4$, Eq. \eqref{eq:WLSSexplicitq} becomes
\begin{equation}
  \bfq^{(k+1)} = \left(\bfP_1^\herm\bfP_1+ \rho\bfI_{n}\right)^{-1}\left(\bfP_1^\herm \bfy + \rho\bfv\right)\:.
\end{equation}
Unfortunately for the general (widely linear) case, such expression of $\bfq^{(k+1)}$ in terms of quaternion-domain matrices $\bfP_i$, $i=1, 2,3,4$ is not possible.
One has to turn back to \eqref{eq:WLSSexplicitq}, which requires inversion of augmented matrices of size $4n$, thus limiting its application for large scale applications when $n$ is large.

\paragraph{Iterative scheme via quaternion gradient descent}
Another possibility is to use an iterative scheme to approximately solve \eqref{eq:subproblemQWLLS}.
We choose here quaternion gradient descent \cite{mandic_quaternion_2011,xu2016optimization}, which takes the form
\begin{equation}
  \begin{split}
    \bfq^{(\ell+1)} = \bfq^{(\ell)} - \eta_\ell \conjgradQ h(\bfq^{(\ell)})\:,\label{eq:gradienDescentSubproblem}
  \end{split}
\end{equation}
where $\eta_\ell$ is an iteration dependent step-size and where $h(\bfq)$ is defined by
\begin{equation}
  \begin{split}
    h(\bfq) &= \left\Vert \bfP_1 \bfq+ \bfP_2 \invol{\bfq}{\bmi} + \bfP_3\invol{\bfq}{\bmj} + \bfP_4 \invol{\bfq}{\bmk}- \bfy \right\Vert_{2}^2  \\
    &+ \rho\left\Vert \bfq - \bfp^{(k)} + \bfu^{(k)} \right\Vert_{2}^2\:.
  \end{split}
\end{equation}
These iterations provide an approximate solution to the $\bfq$-optimization problem \eqref{eq:subproblemQWLLS} such that $\bfq^{(k+1)} = \bfq^{(\ell_0)}$, where $\ell_0$ is defined by an appropriate stopping criterion controlling the accuracy of the solution.
Explicit iterations can be obtained by using results from Appendix \ref{app:WLgradient}:
\begin{align}
  \bfq^{(\ell+1)} & = \bfq^{(\ell)}  - \frac{\eta_\ell}{2} \left\lbrace\bfP_1^\herm \bfr^{(\ell)}_{\bfP} + \invol{\left(\bfP_2^\herm \bfr^{(\ell)}_{\bfP}\right)}{\bmi} + \invol{\left(\bfP_3^\herm \bfr^{(\ell)}_{\bfP}\right)}{\bmj} \right. \nonumber \\
                  & \left.  + \invol{\left(\bfP_4^\herm \bfr^{(\ell)}_{\bfP}\right)}{\bmk}\right\rbrace - \frac{\rho\eta_\ell}{2}\left[\bfq^{(\ell)} - \bfp^{(k)} + \bfu^{(k)}\right]
\end{align}
where $\bfr^{(\ell)}_{\bfP} := \bfr_{\bfP}(\bfq^{(\ell)})$ such that $\bfr_{\bfP}(\bfq) = \bfP_1 \bfq + \bfP_2 \invol{\bfq}{\bmi} + \bfP_3\invol{\bfq}{\bmj} + \bfP_4 \invol{\bfq}{\bmk}- \bfy$.
Compared with the explicit solution \eqref{eq:WLSSexplicitq} of the subproblem \eqref{eq:subproblemQWLLS}, the approximate iterative solution does not require quaternion matrix inversion.
This is particularly interesting with ADMM, since overall convergence of the algorithm can still be guaranteed even when minimization of subproblems is carried out approximately \cite{boyd_distributed_2010}.
As a consequence, performing a few (cheap) quaternion gradient steps \eqref{eq:gradienDescentSubproblem} at each iteration $k$ will ensure convergence of Q-ADMM to a stationary point of the cost function.

\subsection{3D basis pursuit denoising}
\label{sub:3DLasso}

A major application of quaternion algebra lies in its ability to represent 3D data, including color images \cite{le2003quaternion,chen2015color}, wind data \cite{took2011quaternion}, seismics \cite{miron2006quaternion}, among others.
A dataset comprising $N$ 3D vectors is coded as a pure quaternion vector of dimension $N$: for instance, a color image patch is described as the pure quaternion vector $\bfq = \bmi \bfr + \bmj\bfg + \bmk \bfb$, where $\bfr, \bfg, \bfb$ are real vectors denoting the red, green and blue components of the image.
With this representation, we consider the general pure quaternion (or 3D) basis pursuit denoising problem, first formulated in the color image processing literature \cite{xu_vector_2015,barthelemy_color_2015,zou_quaternion_2016}.
3D data measurements $\bfy$ are supposed to follow the linear quaternion model $\bfy = \bfD\bfq + \bfn$, where $\bfD \in \bbH^{m\times n}$ is the dictionary ($m < n$), $\bfq \in \bbH^{n}$ is the vector of sparse coefficients and $\bfn$ is the noise.
While the relevance of the quaternion model has been established by many authors, its interpretability is conditioned by the reconstructed 3D signal $\bfD\bfq$ being a pure quaternion vector, that is $\real(\bfD\bfq) = 0$.
Unfortunately, this is not the case in general, when $\bfD$ and $\bfq$ are quaternion-valued and must be enforced within the algorithm in order to obtain interpretable solutions.
Currently \cite{barthelemy_color_2015}, the constraint $\real(\bfD\bfq) = 0$ is generally imposed by simply nulling the real part of the product $\bfD\bfq$ -- which does not preserve convergence properties.
The proposed algorithm hereafter solves this issue of existing algorithms by leveraging the Q-ADMM framework.

The resulting 3D basis pursuit denoising can be formulated as follows
\begin{equation}
  \begin{split}
    \text{minimize } & \frac{1}{2}\Vert \bfy - \bfD \bfq \Vert_2^2 + \beta \Vert \bfq \Vert_1 \\
    \text{subject to } & \real (\bfD\bfq) = 0\:,\label{prob:3DLasso}
  \end{split}
\end{equation}
where $\beta > 0$ is a parameter that controls the amount of sparsity.
The quaternion $\ell_1$-norm promotes sparsity and is defined by
\begin{equation}
  \Vert \bfq \Vert_1 \triangleq \sum_{i=1}^n \vert q_i\vert = \sum_{i=1}^n \sqrt{q_{ai}^2 + q_{bi}^2 + q_{ci}^2 + q_{di}^2}\:.
\end{equation}
As noted in \cite{xu_vector_2015}, the quaternion $\ell_1$-norm is equivalent to the real $\ell_{2,1}$-norm in $\bbR^{n\times 4}$, meaning that quaternion Lasso can be seen as an instance of real-valued group Lasso where groups are composed of the real and three imaginary parts of a quaternion.
The constraint $\real (\bfD\bfq) = 0$ is widely affine since
\begin{equation}
  \real (\bfD\bfq) = 0 \Leftrightarrow \bfD\bfq + \invol{(\bfD\bfq)}{\bmi} + \invol{(\bfD\bfq)}{\bmj} + \invol{(\bfD\bfq)}{\bmk}= 0
\end{equation}
This ensures that $\eqref{prob:3DLasso}$ defines a quaternion convex optimization problem, which can be rewritten in Q-ADMM form as
\begin{equation}
  \begin{split}
    \text{minimize } & \frac{1}{2}\Vert \bfy - \bfD\bfq \Vert^2_{2} + \beta \Vert \bfp \Vert_1\\
    \text{subject to } & \bfq -\bfp = 0 \text{ and } \real(\bfD\bfq) = 0 \:.
  \end{split}
\end{equation}
Q-ADMM iterations then read
\begin{align}
  \bfq^{(k+1)} & = \argmin_{\substack{\bfq                                                                \\ \real(\bfD\bfq) = 0}} \frac{1}{2}\Vert \bfy - \bfD\bfq \Vert^2_{2} + \frac{\rho}{2}\Vert \bfq - \bfp^{(k)} + \bfu^{(k)}\Vert_2^2\label{eq:quaternionSparseADMM1}\\
  \bfp^{(k+1)} & = \prox_{\frac{\beta}{\rho}\Vert \cdot \Vert_1}\left({\bfq}^{(k+1)} +  \bfu^{(k)}\right) \\
  \bfu^{(k+1)} & = \bfu^{(k)} + {\bfq}^{(k+1)}-\bfp^{(k+1)}\:.\label{eq:quaternionSparseADMM2}
\end{align}
The $\bfq$-update is a widely affine constrained least squares problem, which can be tackled by solving the KKT conditions \eqref{eq:KKT1}--\eqref{eq:KKTend}, see details below.
The $\bfp$-update involves the computation of the proximal operator of the quaternion $\ell_1$-norm, whose expression can be found in the existing literature \cite{chan_complex_2016}:
\begin{equation}
  \prox_{\lambda \Vert \cdot \Vert_1}(\bfq) = \left[\max\left(0, 1- \frac{\lambda}{\vert q_i\vert}\right)q_i\right]_{i=1\ldots n}\triangleq \calS_{\lambda}(\bfq)\:,
\end{equation}
where $\calS_{\lambda}(\cdot)$ is the quaternion soft-thresholding operator.
As a result, the $\bfp$-update becomes
\begin{equation}
  \bfp^{(k+1)} = \calS_{\frac{\beta}{\rho}}\left({\bfq}^{(k+1)} +  \bfu^{(k)}\right)\:.
\end{equation}
\noindent\textbf{Solving the $\bfq$-variable subproblem \eqref{eq:quaternionSparseADMM1}} KKT conditions  \eqref{eq:KKT1}--\eqref{eq:KKTend} for the widely affine constrained least squares problem \eqref{eq:quaternionSparseADMM1} give necessary and sufficient conditions for $(\tilde{\bfq}, \tilde{\boldsymbol{\lambda}})$ to be primal and dual optimal:
\begin{align}
   & \bfD \tilde{\bfq} + \invol{(\bfD\tilde{\bfq})}{\bmi} + \invol{(\bfD\tilde{\bfq})}{\bmj}+ \invol{(\bfD\tilde{\bfq})}{\bmk} =0 \\
   & \frac{1}{4}\bfD^\herm \left( \bfD\tilde{\bfq} - \bfy\right) +
  \frac{\rho}{4}\left(\tilde{\bfq} - \bfv\right)\nonumber
  \\ &+ \frac{1}{4}\left[\bfD^\herm\tilde{\boldsymbol{\lambda}} + \bfD^\herm\invol{\tilde{\boldsymbol{\lambda}}}{\bmi} + \bfD^\herm\invol{\tilde{\boldsymbol{\lambda}}}{\bmj} + \bfD^\herm\invol{\tilde{\boldsymbol{\lambda}}}{\bmk}\right] = 0
\end{align}
where $\bfv = \bfp^{(k)} - \bfu^{(k)}$.
Solving for $\tilde{\bfq}$ gives
\begin{align}
  \tilde{\bfq} & = \left(\bfD^\herm \bfD + \bfI_n\right)^{-1}\left(\rho\bfv + \bfD^\herm \bfy -4\bfD^\herm \real \tilde{\boldsymbol{\lambda}}\right)\:.\label{eq:tildeqsparseADMM}
\end{align}
Let $\tilde{\bfq}_{\text{un}} = \left(\bfD^\herm \bfD + \bfI_n\right)^{-1}\left(\rho\bfv + \bfD^\herm \bfy\right)$ denote the unconstrained solution to the least square problem \eqref{eq:quaternionSparseADMM1}.
Plugging \eqref{eq:tildeqsparseADMM} into the constraint $\real(\bfD\tilde{\bfq}) = 0$ gives the value of the real part\footnote{Note that the imaginary part of $\tilde{\boldsymbol{\lambda}}$ can be arbitrary, since the constraint $\real(\bfD\tilde{\bfq}) = 0$ is here between real-valued expressions.} of the Lagrange multiplier $\tilde{\boldsymbol{\lambda}}$:
\begin{equation}
  \real \tilde{\boldsymbol{\lambda}} = \frac{1}{4}\left[\real\left(\bfD(\bfD^\herm\bfD + \bfI_n)^{-1}\bfD^\herm\right)\right]^{-1} \real\left(\bfD\tilde{\bfq}_{\text{un}}\right)\:.\label{eq:realLambdasparseADMM}
\end{equation}
Note that for large $N$, the computational burden of \eqref{eq:realLambdasparseADMM} may be important due to the double matrix inverse operation.
To summarize, the subproblem \eqref{eq:quaternionSparseADMM1} is solved using KKT conditions as follows:
\begin{itemize}
  \item compute the unconstrained least square solution $\tilde{\bfq}_{\text{un}}$;
  \item compute $\real\tilde{\boldsymbol{\lambda}}$ using \eqref{eq:realLambdasparseADMM};
  \item obtain the explicit solution $\tilde{\bfq}$ to \eqref{eq:quaternionSparseADMM1} using \eqref{eq:tildeqsparseADMM}.
\end{itemize}
It is easily verified that if the unconstrained least square solutions satisfies the constraint, then $\real \tilde{\boldsymbol{\lambda}} = 0$ and the solution to \eqref{eq:quaternionSparseADMM1} is $\tilde{\bfq} = \tilde{\bfq}_{\text{un}}$.

\begin{figure*}[ht]
  \centering
  \includegraphics[width=\textwidth]{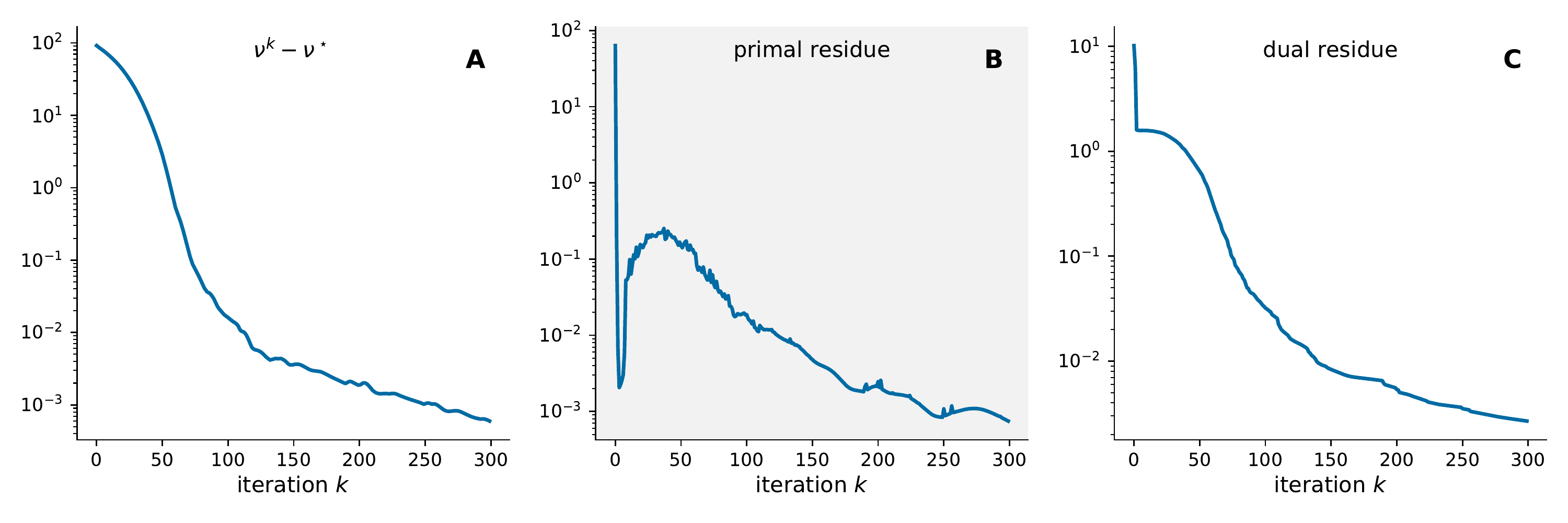}
  \caption{Numerical validation of the proposed quaternion ADMM algorithm for 3D basis pursuit denoising. Evolution of the suboptimality cost (\textbf{\textsf{A}}), norm of the primal residue (\textbf{\textsf{B}}) and norm of dual residue (\textbf{\textsf{C}}) with respect to iteration number $k$.}
  \label{fig:admmPerf}
\end{figure*}

\noindent\textbf{Numerical validation} For completeness, we provide experimental results which demonstrate the effectiveness of the proposed quaternion ADMM algorithm to solve the 3D basis pursuit denoising problem \eqref{prob:3DLasso}.
We generated a dictionary $\bfD \in \bbH^{10\times 1000}$ with random i.i.d. entries sampled from a quaternion unit Gaussian distribution.
Columns of $\bfD$ were normalized to 1.
Then, we fixed a sparse (about $3$ \% nonzeros entries) vector $\bfq_0 \in \bbH^{1000}$ such that $\real(\bfD\bfq_0) = 0$ to ensure a meaningful physical interpretation of $\bfD\bfq_0$ as a vector of 3D objects.
We further generated a noisy observation vector $\bfy = \bfD\bfq_0 + \bfn$, with $\bfn \in \bbH^{10}$ a pure quaternion random vector with i.i.d. entries such that $n_m = \bmi n_{am} + \bmj n_{bm} + \bmk n_{cm}$, and $n_{am}, n_{bm}, n_{cm} \sim \calN(0, \sigma^2)$.
A value of $\sigma = 10^{-1}$ was chosen in all numerical experiments.
We found empirically that the $\ell_1$-penalty parameter $\beta = 0.05$ and Q-ADMM parameter  $\rho = 1$ gave us the best results in terms of rate of convergence and correct identification of sparse components.
To assess the behaviour of the proposed Q-ADMM, we closely followed the approach proposed in \cite[Section 3.3]{boyd_distributed_2010}.

Figure \ref{fig:admmPerf}a) displays the evolution of the suboptimality cost $\nu^{(k)} - \nu^\star$ where $\nu^{(k)} = f(\bfq^{(k)}) + g(\bfp^{(k)}) = \frac{1}{2} \Vert \bfy - \bfD\bfq^{(k)}\Vert_2^2 + \beta \Vert \bfp^{(k)}\Vert_1$. The optimal value $\nu^\star$ of the cost function $f(\bfp) + g(\bfq)$ was obtained by running Q-ADMM for 1000 iterations.
As expected, $\nu^{(k)} - \nu^\star$ decreases to zero with the number of iterations $k$.
Figure \ref{fig:admmPerf}b) and \ref{fig:admmPerf}c) depicts, respectively, the evolution of the norm of the primal residue $\bfq^{(k+1)} - \bfp^{(k+1)}$ and the norm of dual residue $\rho \left(\bfp^{(k+1)} - \bfp^{(k)}\right)$ as iterations increase.
Similarly, both residues converge to zero as Q-ADMM proceeds.
These results show that Q-ADMM behaves similarly to  standard real ADMM \cite{boyd_distributed_2010}.
This is not surprising, since the proposed Q-ADMM algorithm (see Section \ref{sub:quaternionADMM}) was derived using systematic equivalencies with its real-augmented domain counterpart and therefore it enjoys the same convergence properties as standard ADMM.

\section{Conclusion}
\label{sec:conclusion}

In this paper, we provide a general and systematic quaternion-domain mathematical framework to solve constrained convex optimization problems formulated in quaternion variables.
This framework builds upon two key ingredients: \emph{i)} the recent theory of generalized \bbH\bbR-derivatives \cite{xu_enabling_2015} which allows the computation of derivatives with respect to quaternion variables, and \emph{ii)} the explicit correspondence between the original quaternion optimization problem and its real augmented counterpart, to provide sound and general mathematical guarantees.
Notably, it enables the formulation of fundamental quaternion-domain convex optimization tools, such as the quaternion Lagrangian function or KKT optimality conditions.
This methodology further reveals subtle properties of quaternion-domain optimization that would have been hindered otherwise, such as widely affine equality constraints.
For practical purposes, we derived quaternion ADMM (Q-ADMM) as a versatile framework for quaternion-domain optimization.
These results establish a systematic and general methodology to develop quaternion-domain algorithms for convex and nonconvex quaternion optimization problems.

We hope that the proposed framework will favor the development of full quaternion-domain methodologies to efficiently solve practical problems involving 3D and 4D data.
Indeed, when such a problem has a natural formulation in the quaternion domain, we expect the proposed quaternion optimization framework to fully take advantage of the many physical or geometric insights offered by the quaternion representation -- features that would be lost when resorting to an equivalent real formulation of the  problem at hand.
Together with the recent interest in high-performance hardware implementations of quaternion operations \cite{williams-young_efficacy_2019,joldecs2020algorithms}, the proposed framework pave the way to the generalization of quaternion-domain optimization procedures in a wide range of signal processing applications.

\appendices

\section{Quaternion gradient computations}
\label{app:WLgradient}
Consider the function $f: \bbH^n \rightarrow \bbR$ defined by $f(\bfq) = \frac{1}{2} \left\Vert \bfA_1 \bfq + \bfA_2 \invol{\bfq}{\bmi} + \bfA_3\invol{\bfq}{\bmj} + \bfA_4 \invol{\bfq}{\bmk}- \bfb \right\Vert_{2}^2$ where $\bfA_i \in \bbH^{p\times n}$ and $\bfb \in \bbH^p$ are arbitrary.
Using manipulations similar to those of Section \ref{sub:affine}, we get $f(\bfq) = f(\aR{\bfq}) = \frac{1}{2} \Vert \aR{\bfA}\aR{\bfq} - \aR{\bfb}\Vert_2^2$ with real augmented gradient
$$ \gradR f(\aR{\bfq}) = \aR{\bfA}^\transp\left(\aR{\bfA}\aR{\bfq} - \aR{\bfb}\right)$$
Knowing that $\aH{\bfA} = \frac{1}{4}\bfJ_p \aR{\bfA} \bfJ^\herm_n$ and $\aH{\bfq} = \bfJ_n \aR{\bfq}$, we get the augmented conjugated quaternion gradient from \eqref{eq:gradHconjtogradR} as
$$ \conjgradH f(\aH{\bfq}) = \frac{1}{4}\bfJ_n \gradR f(\bfq) = \frac{1}{4}\aH{\bfA}^\herm\left(\aH{\bfA}\aH{\bfq} - \aH{\bfb}\right)\:.$$
Let the residual $\aH{\bfr} = \aH{\bfA}\aH{\bfq} - \aH{\bfb}$. By developing the first $n$-rows of the quaternion matrix product $\aH{\bfA}\aH{\bfr}$ we get the conjugated quaternion gradient
\begin{align*}
  \conjgradQ f(\bfq) & = \frac{1}{4} \left(\bfA_1^\herm \bfr + \invol{\left(\bfA_2^\herm \bfr\right)}{\bmi} + \invol{\left(\bfA_2^\herm \bfr\right)}{\bmj} + \invol{\left(\bfA_2^\herm \bfr\right)}{\bmk}\right)\:.
\end{align*}
%
%

\ifCLASSOPTIONcaptionsoff
  \newpage
\fi



%

\renewcommand*{\bibfont}{\footnotesize}
\printbibliography

%

%
%
%
%
%
%
%
%
%
\end{document}